\documentclass[11pt]{amsart}
\usepackage{a4wide}
\usepackage{amssymb,amsmath,epsfig,mathrsfs, enumerate, xparse}
\usepackage{graphicx}
\usepackage{fancyhdr}
\usepackage{tikz}
\usepackage{ esint }
\usepackage{amsthm}
\usepackage{mathtools}
\usepackage{amsfonts}
\usepackage{amscd}
\usepackage{amsxtra}
\usepackage{dsfont}
\usepackage{esint}
\usepackage{xcolor}


\usepackage[margin=2.5cm]{geometry}
\usepackage{hyperref}

\usepackage[maxbibnames=99,giveninits=true]{biblatex} 
\addbibresource{bibliography.bib} 

\theoremstyle{plain}
\newtheorem{thrm}{Theorem}[section]
\newtheorem*{thrm*}{Theorem}
\newtheorem{lemma}[thrm]{Lemma}
\newtheorem{prop}[thrm]{Proposition}
\newtheorem{cor}[thrm]{Corollary}
\newtheorem{rmrk}[thrm]{Remark}
\newtheorem{dfn}[thrm]{Definition}

\setlength{\textheight}{8.7in}
\allowdisplaybreaks
\begin{document}
\newcommand{\psr}{P^-}
\newcommand{\qr}{Q_\rho}
\newcommand{\sn}{\mathbb{S}}
\newcommand{\SL}{\mathcal L^{1,p}( D)}
\newcommand{\Lp}{L^p( Dega)}
\newcommand{\CO}{C^\infty_0( \Omega)}
\newcommand{\Rm}{\mathbb R^m}
\newcommand{\R}{\mathbb R}
\newcommand{\Om}{\Omega}
\newcommand{\Hn}{\mathbb H^n}
\newcommand{\aB}{\alpha B}
\newcommand{\eps}{\ve}
\newcommand{\BVX}{BV_X(\Omega)}
\newcommand{\p}{\partial}
\newcommand{\IO}{\int_\Omega}
\newcommand{\bG}{\boldsymbol{G}}
\newcommand{\bg}{\mathcal g}
\newcommand{\bz}{\mathcal z}
\newcommand{\bv}{\mathcal v}
\newcommand{\Bux}{\mbox{Box}}
\newcommand{\e}{\varepsilon}
\newcommand{\W}{\mathcal W}
\newcommand{\la}{\lambda}
\newcommand{\vf}{\varphi}
\newcommand{\rhh}{|\nabla_H \rho|}
\newcommand{\Ba}{\mathcal{B}_\beta}
\newcommand{\Za}{Z_\beta}
\newcommand{\ra}{\rho_\beta}
\newcommand{\na}{\nabla_\beta}
\newcommand{\vt}{\vartheta}
\def\dist{\textup{dist}}
\def\H{\mathcal{H}}
\newcommand{\okappa}{\overline\kappa}
\def\beq{\begin{equation}}
\def\eeq{\end{equation}}
\def\sd{\textnormal{sd}}
\numberwithin{equation}{section}

\newcommand{\N}{\mathbb N}
\newcommand{\Z}{\mathbb Z}

\definecolor{cadmiumgreen}{rgb}{0.3, 0.6, 0.35}

\newcommand{\T}{\mathbb T}
\newcommand{\Sob}{S^{1,p}(\Omega)}
\newcommand{\Dxk}{\frac{\partial}{\partial x_k}}
\newcommand{\Co}{C^\infty_0(\Omega)}
\newcommand{\Je}{J_\ve}
\let\div\undefined
\def\div{\textnormal{div}}
\newcommand\dani[1]{{\textcolor{blue}{#1}}}
\newcommand\anna[1]{{\textcolor{red}{#1}}}
\newcommand\ved[1]{{\textcolor{cadmiumgreen}{#1}}}
\newcommand{\bd}{\partial}
\newcommand{\ud}{\,\textnormal d}

\title[Asymptotic of area-preserving geometric flows]{The asymptotic of the Mullins-Sekerka and  the area-preserving curvature flow in the planar flat torus}

 \author[Vedansh Arya]{Vedansh Arya}
\address[Vedansh Arya]{Department of Mathematics and Statistics, University of Jyväskylä, Finland}\email{vedansh.v.arya@jyu.fi}
\author[D. De Gennaro]{Daniele De Gennaro}
\address[Daniele De Gennaro]{Department of Decision Sciences and BIDSA, Bocconi University, Milano, Italy}
\email{daniele.degennaro@unibocconi.it}
\author[A. Kubin]{Anna Kubin}
\address[Anna Kubin]{Institute of Analysis and Scientific Computing, Technische Universit\"at Wien, Vienna, Austria}
\email{anna.kubin@tuwien.ac.at}

\begin{abstract}
We study the asymptotic behavior of flat flow solutions to the periodic and planar two-phase Mullins-Sekerka flow and area-preserving curvature flow. We show that flat flows converge to either a finite union of equally sized disjoint disks or to a finite union of disjoint strips or to the complement of these configurations exponentially fast. 
A key ingredient in our approach is the derivation of a sharp quantitative Alexandrov inequality for periodic smooth sets.
\end{abstract}

\maketitle


\section{Introduction}

In this paper, we study the asymptotic behavior and rate of convergence of the Mullins-Sekerka flow and the area-preserving curvature flow in the planar periodic setting.

We start by recalling that a smooth flow of sets $\{E(t)\}_{t \in [0,T)} \subseteq \T^2$ is a solution to the two-phase Mullins-Sekerka flow in the time interval $[0,T)$ if it satisfies for $t\in[0,T)$
\begin{equation}\label{eq:Mull-Sek}
\begin{cases}
V(x,t)= [\partial_{\nu_{E(t)}} u(x,t)] & \text{for  } x\in\bd E(t), \\
\Delta u(x,t)=0 & \text{for } x\in\T^2 \setminus \partial E(t),\\
u(x,t) = \kappa_{E(t)}(x) & \text{for } x\in \partial E(t),
\end{cases}
\end{equation}
where $V$ denotes the velocity in the  outer normal direction $\nu_{E(t)}$, $\kappa_{E(t)}$ is the curvature of $E(t)$, $\Delta u$ is the Laplacian of $u$ with respect to the $x$-components,
$[\partial_{\nu_{E(t)}} u]$ denotes the jump of the normal derivative of $u$ at $\p E(t)$, i.e., $[\partial_{\nu_{E(t)}}u]:= \p_{\nu_{E(t)}}u^+ - \p_{\nu_{E(t)}}u^-,$ with $u^+$ and $u^-$ denoting the restrictions of $u$ to $\T^2 \setminus E(t)$ and $E(t)$ respectively.
This evolution 
has been proposed in the physical literature in~\cite{MulSek}, see also~\cite{Peg} for more details.

Along this flow, the area is preserved since an integration by parts yields
\[
\frac{\ud}{\ud t}|E(t)|=\int_{\p E(t)}V(x,t) \ud\H^1(x)=\int_{\p E(t)} [\partial_{\nu_{E(t)}}u(x,t)] \ud\H^1(x)=-\int_{\T^2 } \Delta u(x,t) \ud x =0.
\] 
Moreover, the perimeter decreases along the flow since
\begin{align*}
\frac{\ud}{\ud t}\H^1(\p E(t))&=\int_{\p E(t)}V(x,t) \kappa_{E(t)} \ud\H^1(x)=\int_{\p E(t)} [\partial_{\nu_{E(t)}}u(x,t)]u(x,t) \ud\H^1(x)\\
& =-\int_{\T^2}|\nabla u(x,t)|^2 \ud x\le 0.
\end{align*}
Furthermore, the Mullins-Sekerka flow can be seen as the gradient flow of the perimeter with respect to a suitable $H^{-\frac12}$-Riemannian structure~\cite{QLe}. Also, the Mullins-Sekerka flow can be obtained as a sharp-interface limit of the Cahn-Hilliard equation \cite{ABC, Peg}.

We will also be interested in the area-preserving   curvature flow. This flow is defined by the evolution law
\begin{align}\label{eq:mcf}
V(x,t)=-\kappa_{E(t)}+\bar{\kappa}_{E(t)} \ \quad \text{for } x\in\p E(t),
\end{align}
where  $\bar{\kappa}_{E(t)}:=\fint_{\p E(t)} \kappa_{E(t)} d \H^1$ denotes the mean over $\bd E(t)$ of the curvature of $E(t)$. 
This type of evolution has been proposed in the physical literature as a model for coarsening phenomena~\cite{Mul,Mullins}. 
Like the Mullins-Sekerka flow,  the area of the sets $E(t)$  is preserved along the flow and the perimeter is non-increasing. 
Another important feature of the curvature flow is that it can be formally seen as a 
$L^2$-gradient flow of the perimeter.

In general, smooth solutions to \eqref{eq:Mull-Sek}  and \eqref{eq:mcf} may develop singularities in finite time (see for instance~\cite{EscIto,May01sing,MaySim}). 
Taking advantage of the  gradient flow structure of the two flows considered, one can define weak solutions to \eqref{eq:Mull-Sek}  and \eqref{eq:mcf} by means of the minimizing movement scheme (introduced in this setting in~\cite{ATW,LS}). This scheme defines discrete-in-time approximations of the continuum flow, usually called \textit{discrete flows}, depending on a time parameter $h$.  The $L^1$-limit points as $h\to0$ of discrete flows  are called   \textit{flat flows}, which are thus family of sets $E(t)$ defined at each time $t\in[0,\infty).$
After the construction of this global-in-time weak solution, it is a natural problem
to investigate its asymptotics.

There is extensive literature on the asymptotic behavior of solutions to these geometric flows.
On the one hand, under various geometric assumptions on the initial datum, one is able to show the global-in-time existence of \textit{smooth} solutions to \eqref{eq:Mull-Sek} or \eqref{eq:mcf}, and characterize their asymptotic behavior. Concerning the Mullins-Sekerka flow, we cite~\cite{AFJM, Che, EscSim98, GarRau}, while some references for the volume-preserving mean curvature flow are~\cite{BelCasChaNov,CesNov23,ES,DegDiaKubKub,Nii}.

Alternatively, one can directly study \textit{discrete flows} or \textit{flat flows}, a perspective that has gained significant interest in light of recent results on \textit{weak-strong uniqueness} for the flows under consideration. Specifically, these results demonstrate that, as long as the classical solution to \eqref{eq:Mull-Sek} or \eqref{eq:mcf} exists, any flat flow coincides with it. This has been proven for \eqref{eq:Mull-Sek} in~\cite{FisHenLauSim, HenSti} (in dimension two) and for \eqref{eq:mcf} in~\cite{JN}, under certain regularity assumptions on the initial datum, see also~\cite{Lau24} for a \textit{weak-strong uniqueness} result for a different weak notion of volume-preserving mean curvature flow.
The situation in the Euclidean setting  $\R^2$ and $\R^3$ is  well understood for the mean curvature flow \eqref{eq:mcf}. The first results concern the convergence \textit{up to translations} of \textit{flat flows} toward balls, as proved in~\cite{JN2020} for $N=2,3$. Later on, thanks to a novel quantitative version of the Alexandrov Theorem with sharp exponents, in~\cite{MPS} the authors proved the convergence of \textit{discrete flows} toward balls, with exponential rates, and without the additional translations. Subsequently, they managed to extend this study to the more challenging case of \textit{flat flows} in~\cite{JulMorPonSpa, JulMorOroSpa}. See also~\cite{KimKwo} for similar results in the planar anisotropic case. Results for \textit{flat flows} solutions to \eqref{eq:Mull-Sek} in $\T^2,\T^3$ are again contained in~\cite{JulMorPonSpa,JulMorOroSpa}, under the assumption that the initial datum $E_0$ 
has perimeter under a fixed threshold.  In $\T^2$, this amounts to ask that the initial datum $E_0$ satisfies $P(E_0)<2$. This assumption is crucial.
Indeed, since the flow does not increase the perimeter, the only possible limit points of the flow are unions of balls, therefore the authors could essentially apply the stability results they obtained in $\R^2$ without much change.

We focus in this paper on the planar, periodic setting $\T^2.$ Working in the periodic setting $\T^N$  is  more challenging than considering the Euclidean setting $\R^N$, since the possible limit points for the flows \eqref{eq:Mull-Sek},\eqref{eq:mcf} include more configurations than just balls. For instance, we find strips and, in dimensions three and higher, also cylinders and more complex periodic configurations known as gyroids. In~\cite{DeKu} two of the authors focused on the study of the asymptotic behavior of \textit{discrete flows} to \eqref{eq:mcf} in $\T^N$, starting from small deformations of stable (in a suitable sense) sets. Taking advantage of the simpler structure of stable sets in the planar case, a result for general initial sets was provided in dimension two. The convergence rate is exponential as well, thanks to a quantitative  version of  Alexandrov Theorem holding for periodic configurations. See also~\cite{DegKuKu} for the asymptotic behavior of \textit{discrete flows} to the fractional counterpart of \eqref{eq:mcf}.

This paper extends the results of~\cite{DeKu, JulMorPonSpa} in two directions. We show the first general result for the   asymptotic behavior of \textit{flat flows} for the Mullins-Sekerka flow \eqref{eq:Mull-Sek} starting from any set $E$ of finite perimeter, without requiring $P(E)<2$. This completes the theory for the Mullins-Sekerka flow in the planar, periodic setting   started in~\cite{JulMorPonSpa}.  Secondly, we extend the results of~\cite{DeKu} to \textit{flat flows} for the area-preserving curvature flow \eqref{eq:mcf} in $\T^2$. Our main result reads as follows. 
\begin{thrm*}
    Let $\{E(t)\}_{t\ge 0}$ be a \textit{flat flow} for the Mullins-Sekerka  flow \eqref{eq:Mull-Sek}, or for the area-preserving curvature flow \eqref{eq:mcf}, starting from a set of finite perimeter $E(0)\subseteq \T^2.$ Then, there exists a constant $C>1$ such that for all $t>0$ it holds 
    \begin{equation*}
        | E(t)\triangle E_\infty| \le C e^{-\frac tC},
    \end{equation*} 
    where $E_{\infty}$ takes one of the following forms:
    \begin{itemize}
        \item[(i)] $E_\infty$ or $\T^2\setminus E_\infty$ is a finite union of disjoint open disks with equal areas;
        \item[(ii)] $E_\infty$ is a finite union of disjoint open strips, possibly with different areas.
    \end{itemize}
\end{thrm*}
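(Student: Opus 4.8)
The plan is to combine three ingredients: the dissipation inequalities that any flat flow inherits from the minimizing movement scheme, a sharp quantitative Alexandrov-type inequality for periodic sets, and a \L ojasiewicz--Simon type argument to upgrade convergence-in-time to an exponential rate. First I would record the basic properties of flat flows: the perimeter $t \mapsto P(E(t))$ is non-increasing, the area $|E(t)|$ is constant, and, summing the minimizing movement estimates, there is an integrated dissipation bound of the form $\int_0^\infty (\text{dissipation}) \ud t < \infty$, where the dissipation controls the $H^{-1/2}$-norm (for Mullins--Sekerka) or the $L^2$-norm (for the curvature flow) of $\kappa_{E(t)} - \bar\kappa_{E(t)}$ (or $u - $constant). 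From this one extracts a sequence of times $t_k \to \infty$ along which the sets $E(t_k)$ converge in $L^1$ (and, by regularity theory for these flows, in a stronger sense) to a smooth limit set $E_\infty$ whose boundary has constant curvature. Because the perimeter is bounded and the area is fixed, $E_\infty$ cannot be an arbitrary periodic critical set: a classification of constant-curvature periodic boundaries in $\T^2$ shows $\partial E_\infty$ is a disjoint union of equal-radius circles (case (i), possibly after passing to the complement) or a disjoint union of parallel straight lines, i.e. strips (case (ii)).

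Next I would establish the quantitative Alexandrov inequality advertised in the abstract, which should read roughly as follows: for a periodic smooth set $E$ close to a limit configuration $E_\infty$ of type (i) or (ii), one has a bound
\begin{equation*}
\min_{F} |E \triangle F| \le C \, \| \kappa_E - \bar\kappa_E \|_{H^{-1/2}(\partial E)}^{\alpha}
\end{equation*}
(and the analogous one with the $L^2$-norm) where the minimum is over the admissible limit configurations $F$ with $|F| = |E|$, and $\alpha$ is the sharp exponent. The strips case requires care because the family of limiting strip configurations is not rigid — strips of different widths are allowed — so the ``distance to the nearest critical configuration'' must be measured against this whole family; the inequality near strips should still hold because the relevant second variation of the perimeter is non-degenerate transverse to this family.

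With these tools in place, the convergence argument runs as follows. Let $d(t) := \min_F |E(t) \triangle F|$ be the $L^1$-distance to the appropriate family of limit configurations. One shows a differential inequality: along the discrete scheme, the decrement of the perimeter at each step dominates a power of the dissipation (via the minimizing movement Euler--Lagrange equation), while the Alexandrov inequality turns the dissipation into control of $d(t)$; combining these, and using that $P(E(t)) - P(E_\infty)$ is itself comparable to a power of $d(t)$ (the \L ojasiewicz inequality / an isoperimetric-deficit bound near the limit set), one obtains that $P(E(t)) - P(E_\infty)$ decays exponentially, hence so does $\sqrt{P(E(t)) - P(E_\infty)}$, which in turn controls $|E(t) \triangle E_\infty|$ (this last step also pins down a \emph{single} limit $E_\infty$ rather than a subsequential one, by a standard argument showing the trajectory has finite length). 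Passing from the discrete estimates to the flat flow requires care with the $h \to 0$ limit, but the exponential bound is stable under this limit.

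The main obstacle I expect is twofold. First, proving the sharp quantitative Alexandrov inequality uniformly in the periodic setting near \emph{both} the disk configurations and the strip configurations: the strips form a positive-dimensional manifold of critical points with a degenerate direction (translation and width), so one needs a Lyapunov--Schmidt style reduction to factor out the degeneracy and prove the estimate on the complement, and one must check the estimate does not deteriorate as the number of disks or strips changes or as two components approach each other. Second, one must rule out that the flow, starting from a set with large perimeter, gets ``stuck'' near an unstable critical configuration forever: this is handled by the strict perimeter-decrease and the fact that the only configurations around which the flow can stabilize are the ones with non-negative second variation in the constrained sense, which — by the classification in $\T^2$ — are exactly the disks and strips. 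Threading the $L^1$-distance, the dissipation, and the perimeter deficit through a single self-improving inequality, with all constants uniform, is the technical heart of the argument.
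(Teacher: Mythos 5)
Your overall architecture (dissipation bounds from the scheme, a quantitative Alexandrov inequality, a \L ojasiewicz-type self-improvement) is the right spirit, but there are two places where the proposal diverges from what actually works, and one of them is a genuine gap.

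First, the form of the quantitative Alexandrov inequality matters. You propose an estimate of the form $\min_F |E \triangle F| \le C\|\kappa_E-\bar\kappa_E\|^\alpha$, i.e.\ control of the $L^1$-distance to the nearest critical configuration. The inequality the argument actually needs (and the one the paper proves) controls the \emph{perimeter deficit} $\min_{P\in\mathcal{P}_{m,M}}|P(E)-P|$ by $\|\kappa_E-\bar\kappa_E\|_{L^2(\partial E)}^2$, where $\mathcal{P}_{m,M}$ is the \emph{finite} set of admissible perimeters of disk/strip unions. This distinction is not cosmetic: in the strip case the limiting configuration is not determined by its perimeter (the widths of the individual strips are free), so a smallness bound on the curvature oscillation at a single time cannot pin down a nearest configuration in $L^1$, and your ``distance to the critical manifold'' $d(t)$ is not a quantity you can close a differential inequality on. The mechanism that actually yields exponential $L^1$-convergence is different: since $\mathcal{P}_{m,M}$ is finite, the limit $F_\infty$ of the (monotone) perimeters either sits strictly between two consecutive admissible values or is eventually attained; in the first case the perimeter-deficit Alexandrov inequality, combined with the Euler--Lagrange equation and the telescoping dissipation sum, gives a discrete Gronwall inequality $\sum_{k>i}\mathcal{D}_k\le \frac{C}{h}\mathcal{D}_i$, hence geometric decay of the dissipation tail; an interpolation inequality $\|\cdot\|_{L^1}\le C\e\|\cdot\|_{BV}+C\e^{-1}\|\cdot\|_{H^{-1}}$ then converts this into an exponential Cauchy estimate $|E(t)\triangle E(s)|\le Ce^{-t/C}$, which is what simultaneously produces the unique limit and the rate. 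Your ``finite length of the trajectory'' remark points in this direction but does not identify where the summable (let alone exponential) decay of the increments comes from.

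Second, your extraction of a smooth constant-curvature limit ``by regularity theory for these flows'' is not available for flat flows: no regularity of $E(t)$ itself is known. All curvature information must be harvested at the discrete level, where the minimizers are $C^{2,\alpha}$ (resp.\ $C^{3,\alpha}$) and satisfy an Euler--Lagrange equation; one uses a mean-value argument in time to find discrete times $s_n$ at which $\|\kappa-\bar\kappa\|_{L^2}^2$ is exponentially small, applies the Alexandrov theorem there, and only then passes to the limit $h_n\to 0$. Relatedly, your concern about the flow getting ``stuck'' near unstable critical configurations and the appeal to second variation is a red herring: the theorem does not exclude unstable configurations as limits (the flow may well converge to many small disks), and no stability analysis is needed -- the case dichotomy on $F_\infty$ replaces it entirely.
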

We will show also improved convergence results under additional assumptions on the \textit{flat flow} (see Corollaries \ref{cor:convper} and \ref{cor:conveper+hausd}).
We also remark that the constant $C$ in the statement of the theorem depends on the initial set and cannot be bounded by a universal constant. This can be seen  by considering the union of two disks of different sizes as initial datum.

A key element in the proof of our main theorem is a novel stability estimate, in the form of a quantitative Alexandrov Theorem inspired by~\cite{JulMorPonSpa} and extending the one of~\cite{DeKu}. To proceed, we introduce some notation. We define a strip to be a set bounded by two parallel lines.
\begin{dfn}\label{definizione}
    Given $m>0$ and $M>0$, let $\mathcal{P}_{m,M}$ denote  the perimeters of the union of disjoint disks or the union of disjoint parallel strips in $ \T^2$ with total area $m$ and perimeter less than or equal to $M$. 
\end{dfn}
Observe that $\mathcal{P}_{m,M}$ is a finite set (see~\cite[Remark 6.2]{DeKu}). A simplified version of our stability inequality reads as follows.
\begin{thrm}\label{teo alex}
    Let $m, M>0$ and let $E \subseteq \T^2$ be a set of class $C^2$, with $|E|=m$ and $P(E)\le M$. Then there exists a constant $C(m,M)>0$ such that
    \begin{equation}\label{eq:alex1}
        \min_{P \in \mathcal{P}_{m,M}} |P(E)-P|\le C \|\kappa_E-\bar \kappa_E\|^2_{L^2(\partial E)}.
    \end{equation}
    Moreover, if  $\delta_0>0$  is such that $P\le P(E) \le P'-\delta_0$ for every $P< P'\in \mathcal{P}_{m,M}$, then it holds
    \begin{equation*}
        P(E)-P\le C\|\kappa_E-\bar \kappa_E\|^2_{L^2(\partial E)},
    \end{equation*}
    with $C=C(m,M,\delta_0).$
\end{thrm}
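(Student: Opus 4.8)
The plan is to reduce, by a compactness argument, to the case in which $\partial E$ is a small normal graph over the boundary of one of the model configurations, and then to run a Fuglede-type second-variation estimate. Write $\mathcal F_{m,M}$ for the family of sets $F\subseteq\T^2$ such that $F$, or $\T^2\setminus F$, is a finite union of disjoint equal disks, or $F$ is a finite union of disjoint parallel strips, with $|F|=m$ and $P(F)\le M$; thus $\mathcal P_{m,M}=\{P(F):F\in\mathcal F_{m,M}\}$, a finite set. Since $P(E)\le M$, the left-hand side of \eqref{eq:alex1} is bounded by a constant $C_0(m,M)$, so it suffices to prove \eqref{eq:alex1} when $\|\kappa_E-\bar\kappa_E\|_{L^2(\partial E)}\le\eta_0$ for a threshold $\eta_0(m,M)>0$ to be fixed. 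The main step is the qualitative rigidity statement: \emph{for every $\e>0$ there is $\eta_0(m,M,\e)>0$ such that, if $|E|=m$, $P(E)\le M$ and $\|\kappa_E-\bar\kappa_E\|_{L^2(\partial E)}\le\eta_0$, then $\partial E=\{x+\psi(x)\nu_F(x):x\in\partial F\}$ for some $F\in\mathcal F_{m,M}$, with $\|\psi\|_{C^{1,\alpha}(\partial F)}\le\e$.}

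I would prove this by contradiction. Along a sequence $E_j$ with $\|\kappa_{E_j}-\bar\kappa_{E_j}\|_{L^2}\to0$, the identity $\int_\gamma\kappa_{E_j}\ud\H^1=2\pi$ on each contractible boundary component $\gamma$, combined with the $L^2$-bound, forces every such component to have length close to $2\pi/\bar\kappa_{E_j}$; together with $P(E_j)\le M$ and $|E_j|=m$ this keeps $\bar\kappa_{E_j}$ bounded and the number of boundary components finite, and excludes degeneration. Hence, up to subsequences and translations, $E_j\to F$ in $L^1$ with $P(F)\le\liminf_j P(E_j)\le M$ by lower semicontinuity; the $L^2$ curvature bound gives a uniform $W^{2,2}$ bound on the boundary curves, so $\partial E_j\to\partial F$ in $C^{1,\alpha}$ and $\kappa_F\equiv\mathrm{const}$. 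By the Alexandrov rigidity on $\T^2$ — a closed embedded curve of constant curvature is a round circle or a closed geodesic, and disjoint such curves are equal circles or parallel geodesics — one gets $F\in\mathcal F_{m,M}$, contradicting that the $E_j$ stay away from $\mathcal F_{m,M}$; this follows the scheme of~\cite{DeKu}, the novelty being the purely $L^2$ control of the curvature, in the spirit of~\cite{JulMorPonSpa}. I would moreover, choosing the positions of the components of $F$ optimally, take $\psi$ to be $L^2(\partial F)$-orthogonal to the (finite-dimensional) kernel $K_F$ of the linearized operator $\phi\mapsto-L_F\phi+\fint_{\partial F}L_F\phi$, where $L_F=\partial_s^2+\kappa_F^2$ is the Jacobi operator along $\partial F$ and $s$ is arc length; this amounts to $\psi$ having no $k=\pm1$ Fourier mode on each circle-component and zero mean on each strip-boundary line, and the constraint $|E|=|F|$ additionally gives $|\fint_{\partial F}\psi|\le C\|\psi\|_{L^2}^2$.

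In this graph regime I expand both sides. Since the length element of $\partial E$, pulled back to $\partial F$, equals $\sqrt{(1+\kappa_F\psi)^2+(\partial_s\psi)^2}\ud s$,
\[
P(E)-P(F)=\int_{\partial F}\kappa_F\,\psi\ud s+\tfrac12\int_{\partial F}(\partial_s\psi)^2\ud s+O\big(\|\psi\|_{C^1}\|\psi\|^2_{H^1(\partial F)}\big),
\]
and since $\kappa_F\equiv\bar\kappa_F$ and $\int_{\partial F}\psi=O(\|\psi\|_{L^2}^2)$ by the volume constraint, $|P(E)-P(F)|\le C\|\psi\|^2_{H^1(\partial F)}$. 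For the curvature, the topological invariance $\int_{\partial E}\kappa_E=\int_{\partial F}\kappa_F$ gives $\bar\kappa_E-\kappa_F=O(\|\psi\|^2_{H^1})$, whence $\kappa_E-\bar\kappa_E=-L_F\psi+R$ with $\|R\|_{L^2}\le C\|\psi\|_{C^1}\|\psi\|_{W^{2,2}}$; moreover $\|L_F\psi\|_{L^2(\partial F)}\ge c\|\psi\|_{W^{2,2}(\partial F)}$ for $\psi\in K_F^\perp$, since on a circle of radius $r$ the operator $L_F$ multiplies the mode $e^{iks/r}$ by $\tfrac{1-k^2}{r^2}$, vanishing only at $k=\pm1$ (the modes spanning $K_F$ there), and the strips ($\kappa_F=0$) are analogous. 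Hence, once $\e$ is small enough (using that $\|\psi\|_{W^{2,2}(\partial F)}$ stays bounded, e.g.\ by Gagliardo--Nirenberg and the $L^2$ curvature bound), $\|\kappa_E-\bar\kappa_E\|_{L^2(\partial E)}\ge\tfrac c2\|\psi\|_{W^{2,2}(\partial F)}$.

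Combining these, $\min_{P\in\mathcal P_{m,M}}|P(E)-P|\le|P(E)-P(F)|\le C\|\psi\|^2_{H^1}\le C\|\psi\|^2_{W^{2,2}}\le C''\|\kappa_E-\bar\kappa_E\|_{L^2(\partial E)}^2$, which with the reduction above proves \eqref{eq:alex1}. For the refined inequality, fix $\delta_0$ as in the statement and let $c_0(m,M,\delta_0)>0$ be such that, by \eqref{eq:alex1}, $\|\kappa_E-\bar\kappa_E\|_{L^2}^2<c_0$ forces $\min_{P'\in\mathcal P_{m,M}}|P(E)-P'|<\delta_0/2$. If $\|\kappa_E-\bar\kappa_E\|_{L^2}^2<c_0$, then under the hypothesis $P\le P(E)\le P'-\delta_0$ the element of $\mathcal P_{m,M}$ nearest to $P(E)$ must be $P$ itself, whence $P(E)-P=\min_{P'\in\mathcal P_{m,M}}|P(E)-P'|\le C''\|\kappa_E-\bar\kappa_E\|_{L^2}^2$; if instead $\|\kappa_E-\bar\kappa_E\|_{L^2}^2\ge c_0$, then trivially $P(E)-P\le M\le (M/c_0)\|\kappa_E-\bar\kappa_E\|_{L^2}^2$. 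Taking $C=\max\{C'',M/c_0\}$ concludes. The principal obstacle is the qualitative rigidity step — promoting the $L^2$-smallness of $\kappa_E-\bar\kappa_E$ to a quantitative $C^{1,\alpha}$-graph over an element of $\mathcal F_{m,M}$ and controlling the non-compact families of translations and, in the strip case, of translations and width-redistributions — while the Fuglede-type computation above is, modulo bookkeeping, routine and parallels~\cite{DeKu,JulMorPonSpa,MPS}.
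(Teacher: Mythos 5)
Your proposal is correct in outline, but it takes a genuinely different route from the paper. The paper does not run a Fuglede-type second-variation argument at all. Its proof (via Theorem~\ref{prop alex}) first uses the torus Gauss--Bonnet dichotomy of Lemma~\ref{GB} together with the $L^2$-smallness of $\kappa_E-\bar\kappa_E$ to show, quantitatively and without compactness, that all components of $\partial E$ have the same total curvature $0$ or $\pm2\pi$ (this is the part of your compactness step that excludes mixed circle/line configurations and degeneration). In the winding-$\pm2\pi$ case it defers to~\cite{AFJM}; in the winding-$0$ case it integrates the curvature explicitly along each component, writing the unit tangent as the rotation by $L(s)=\int_0^s\kappa$ of the initial tangent, which yields directly that each component is a $C^1$-graph over a rational line with $\|f_i\|_{C^1}\le C\|\kappa_E-\bar\kappa_E\|_{L^2}$ --- a \emph{linear}, fully quantitative bound that your compactness argument only provides qualitatively and that you must afterwards recover through the spectral gap of $L_F$. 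The quadratic perimeter estimate is then just $0\le P(E_i)-P(S_i)\le C\|f_i\|_{C^1}^2$ via the area formula, after choosing the comparison strip with $|S_i|=|E_i|$; this sidesteps entirely the degenerate directions (translations, width redistribution among strips, the $k=\pm1$ modes on circles) that force you to project $\psi$ onto $K_F^\perp$. What your route buys is alignment with the standard stability machinery of~\cite{MPS,DeKu,JulMorPonSpa} and better prospects in higher dimensions; what the paper's route buys is a short, constructive argument with explicit dependence on the data. One technical point you should not gloss over if you pursue your route: the exact $L^2(\partial F)$-orthogonality of $\psi$ to $K_F$ (killing the $k=\pm1$ modes by repositioning each disk, and the constants by repositioning each line) requires an optimal-translation or fixed-point argument, since translating $F$ acts nonlinearly on $\psi$; the usual alternative is to show the projection onto $K_F$ is quadratically small, as you already do for the mean. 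Your treatment of the second assertion of Theorem~\ref{teo alex} (the dichotomy between $\|\kappa_E-\bar\kappa_E\|_{L^2}^2<c_0$ and $\ge c_0$) is fine.
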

Note that  the right hand side of \eqref{eq:alex1}  has quadratic dependence on $\|\kappa_E-\bar \kappa_E\|^2_{L^2(\partial E)}$, which is optimal. 
We highlight here the major difficulty that one encounters when using Theorem~\ref{teo alex} in our setting, compared to~\cite{JulMorPonSpa}. It is possible that for a    given value  $P \in \mathcal{P}_{m,M}$  there is more than one possible limit configuration corresponding to $P$: consider for instance the case of a horizontal and a vertical strip. 
Thus, the bounds on   $\|\kappa_E-\bar \kappa_E\|^2_{L^2(\partial E)}$ that one can prove along the flow are not sufficient to fix (up to translations) the limiting configuration, making the arguments more involved. Furthermore, the volume of each connected component, in general, cannot be bounded from below, as can be seen considering two parallel strips with equal total volume  differently divided between the two.
We conclude by underlining another relevant difference between our result and the main one in~\cite{JulMorPonSpa}. In the case when the $L^1$-limit points of the \textit{flat flows} are strips, we fall short of proving the convergence of the perimeters along the flow without further assumptions on the initial datum. 
This is essentially due to the fact that if the limit configuration is made of strips then it is not unique (up to translations) if one fixes the perimeter, as we can change the volume of each connected component.  This is not the  case for unions of balls.  In particular, we cannot rule out that the Hausdorff limit of the flow has connected components of zero area.

\section{Quantitative Alexandrov Theorem}

This section is devoted to the proof of a quantitative version of the Alexandrov Theorem. For notation and basic definitions, we refer to~\cite{DeKu,JulMorPonSpa}. We start by recalling a preliminary result.
\begin{lemma}\label{GB}
Let $\gamma: [0,l] \rightarrow \T^2$ be a unit speed, simple, closed $C^2$ curve. Then
$$\int_0^{l} \kappa(\gamma(\tau)) \ud\tau \in \{0,\pm2\pi\}.$$
Moreover, in the case $\int_0^l \kappa(\gamma(\tau)) \ud \tau = 0$, the length of the curve $\gamma$ is at least 1.
\end{lemma}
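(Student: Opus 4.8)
The plan is to split according to whether the simple closed curve $\gamma$ is contractible in $\T^2$ or not, using two facts: (i) total curvature is a local quantity, hence invariant under passage to local isometries, and (ii) the Gauss--Bonnet theorem on flat surfaces. Throughout, the curvature computed along $\gamma$ in $\T^2$ agrees (up to the sign fixed by the choice of normal) with the Euclidean curvature of any local lift, since the projections $\R^2 \to \T^2$ and $\R^2 \to \R^2/\langle v\rangle$ are local isometries.

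First I would treat the contractible case. Then $\gamma$ lifts to a simple closed $C^2$ curve $\tilde\gamma$ in the plane $\R^2$, and the Hopf Umlaufsatz (theorem of turning tangents) gives $\int_0^l \kappa(\gamma(\tau))\ud\tau = \pm 2\pi$, the sign depending on the orientation. Next, suppose $\gamma$ is not contractible. Its free homotopy class is a nonzero element $v \in \pi_1(\T^2) \cong \Z^2$. Pass to the intermediate cylindrical cover $C := \R^2/\langle v\rangle \to \T^2$; since $v$ lies in the image of $\pi_1(\gamma)$, the curve $\gamma$ lifts to a closed curve $\bar\gamma$ in $C$, which is again simple and which is essential (homotopically nontrivial) in $C$, as it represents a generator of $\pi_1(C)\cong\Z$. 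An essential simple closed curve in the open cylinder $C$ separates it into two half-open flat cylinders; let $\Sigma$ be one of them, so that $\partial\Sigma = \bar\gamma$, and pick a closed geodesic latitude $\sigma\subseteq\Sigma$ — the image of a straight line parallel to $v$ — lying far enough into the end that the region $D\subseteq\Sigma$ bounded by $\bar\gamma$ and $\sigma$ is compact. Then $D$ is a compact flat surface with boundary $\bar\gamma\sqcup\sigma$, has no corners, and is homeomorphic to an annulus, so $\chi(D)=0$; Gauss--Bonnet yields $\int_{\bar\gamma}\kappa_g\ud s + \int_\sigma\kappa_g\ud s = 2\pi\chi(D) = 0$. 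Since $\sigma$ is a geodesic its contribution vanishes, whence $\int_0^l \kappa(\gamma(\tau))\ud\tau = \pm\int_{\bar\gamma}\kappa_g\ud s = 0$. Combining the two cases shows the total curvature lies in $\{0,\pm2\pi\}$, and that the value $0$ occurs only when $\gamma$ is non-contractible.

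For the final assertion: if $\int_0^l\kappa(\gamma(\tau))\ud\tau = 0$, then by the above $\gamma$ is non-contractible, so any lift $\tilde\gamma\colon[0,l]\to\R^2$ satisfies $\tilde\gamma(l)-\tilde\gamma(0) = v$ for some nonzero lattice vector $v\in\Z^2$. Since $\gamma$ has unit speed, $\mathrm{length}(\gamma) = l = \int_0^l |\tilde\gamma'(\tau)|\ud\tau \ge |\tilde\gamma(l)-\tilde\gamma(0)| = |v| \ge 1$, because the shortest nonzero vector of $\Z^2$ has length $1$.

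I expect the only real obstacle to be the elementary but slightly fiddly planar topology used in the non-contractible case — namely that an essential simple closed curve on the open cylinder, together with a latitude geodesic pushed out toward the end, bounds a compact annular region — along with keeping track of the sign/normalization of the geodesic curvature $\kappa_g$ relative to the curvature $\kappa$ employed in the paper; both points are classical.
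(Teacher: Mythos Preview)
Your proof is correct. The contractible case is handled exactly as in the paper's sketch (lift to $\R^2$ and apply the Umlaufsatz). For the non-contractible case the paper proceeds differently: it lifts $\gamma$ to an arc in $\R^2$ from $\tilde\gamma(0)$ to $\tilde\gamma(0)+v$, then closes the loop by an explicit construction and computes directly that the total signed curvature vanishes. Your argument instead passes to the intermediate cylindrical cover $\R^2/\langle v\rangle$ and applies Gauss--Bonnet to the flat annulus bounded by the lift $\bar\gamma$ and a closed geodesic core circle; this is conceptually cleaner and avoids any bookkeeping with exterior angles at the artificial corners introduced when closing the loop, at the modest cost of invoking the (standard) separation property for essential simple closed curves on an open cylinder. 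Both routes are short, and the length bound in the case $\int\kappa=0$ follows by the same lift-to-$\R^2$ argument you give.
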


The proof of Lemma~\ref{GB} follows reasoning as in the proof of~\cite[Theorem 1]{Rei}. One considers the lift to $\R^2$ of $\gamma([0,l])\subseteq \T^2$, and, if it bounds a region, the result follows. If not, then, by an explicit construction, one can close the loop and directly compute that $\int_0^l \kappa=0$.

We will now prove Theorem~\ref{teo alex}, which is inspired by \cite[Theorem 1.1]{JulMorPonSpa}. This will be an immediate consequence of the following result. We will denote by $D_r(x)\subseteq \R^2$ the disk of radius $r$ and centered at $x$. 
We recall that a strip is a set bounded by two parallel lines, usually denoted by 
$S$. We define the slope of a strip $S$ the angle that the normal vector $\nu_S$ forms with the x-axis, i.e., $\nu_S\cdot e_1$.

\begin{thrm}\label{prop alex}
Let $m,M>0$. Then there exist $\varepsilon>0$ and $C(m,M)>1$ with the following property. Let $E \subseteq \T^2$ be an open set of  class $C^2$ with $|E|=m$, $P(E) \le M$ and $\|\kappa_E-\bar \kappa_E\|_{L^2(\partial E)}\le \varepsilon$. Then there are  two possibilities:
\begin{itemize}
\item[(i)] $E$, or $E^c$, is diffeomorphic to a union of $d$ disjoint disks $D_1,\ldots,D_d$ at positive distance from each other, with equal radii. Moreover 
$$ \Big|P(E)-P\Big(\bigcup_{i=1}^d D_i\Big)\Big| \le C \|\kappa_E-\bar{\kappa}_E\|^2_{L^2(\partial E)};$$
\item[(ii)] $E$ is diffeomorphic to a union of $\ell $ disjoint  parallel strips $S_1,\ldots,S_\ell$ at positive distance from each other, and total mass $m$. Moreover 
$$ \Big|P(E)-P\Big(\bigcup_{i=1}^\ell S_i\Big)\Big| \le C \|\kappa_E-\bar{\kappa}_E\|^2_{L^2(\partial E)};$$
\end{itemize}
where $\ell$ is bounded from above by a constant only depending on $m$ and $M.$

Moreover, the boundary of every connected component $E_i$ of $E$ (or $E^c$) can be parametrized as a normal graph over one of the discs $D_i$, in the first case, or one of the strips $S_i$, in the second case, with $C^{1,\frac{1}{2}}$-norm of the parametrization vanishing as $\varepsilon \to 0$. Furthermore, it holds $|E_i|=|D_i|$ or $|(E^c)_i|=|D_i|$, respectively, $|E_i|=|S_i|$.
\end{thrm}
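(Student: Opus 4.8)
The plan is to prove Theorem~\ref{prop alex} in two stages: a \emph{structural reduction} identifying the model configuration (a union of equal disks, its complement, or a union of parallel strips) and realising $\partial E$ as a small normal graph over it, followed by a \emph{Fuglede-type} second-order estimate comparing the perimeter deficit with $\|\kappa_E-\bar\kappa_E\|_{L^2(\partial E)}^2$. Write $\sigma:=\|\kappa_E-\bar\kappa_E\|_{L^2(\partial E)}\le\varepsilon$, with the threshold $\varepsilon=\varepsilon(m,M)$ fixed at the end.

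\emph{Structural reduction.} Since $E$ is of class $C^2$, $\partial E=\gamma_1\sqcup\dots\sqcup\gamma_k$ is a finite union of disjoint simple closed $C^2$ curves, of lengths $l_i$ with $\sum_i l_i=P(E)\le M$, and by Lemma~\ref{GB} each $\gamma_i$ has total curvature $2\pi s_i$ with $s_i\in\{0,\pm1\}$. Setting $\bar\kappa_i:=\fint_{\gamma_i}\kappa_E$, the optimality of $\bar\kappa_i$ as an $L^2(\gamma_i)$-constant approximation of $\kappa_E$ and Cauchy--Schwarz give $\|\kappa_E-\bar\kappa_i\|_{L^2(\gamma_i)}\le\|\kappa_E-\bar\kappa_E\|_{L^2(\gamma_i)}$ and $\sigma^2\ge\sum_i l_i(\bar\kappa_i-\bar\kappa_E)^2$, and since $\bar\kappa_i=2\pi s_i/l_i$ this produces, for $\varepsilon$ small, the dichotomy: \emph{either} every $s_i=0$ \emph{or} every $s_i\ne0$. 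In the first case $\bar\kappa_E=0$, each $\gamma_i$ has curvature $O(\sigma)$ and zero total curvature, hence length $\ge1$ (Lemma~\ref{GB}) and $C^{1,\frac12}$-proximity to a simple closed geodesic of $\T^2$; as two closed geodesics of distinct slope must intersect, disjointness of the $\gamma_i$ forces them mutually parallel, so $E$ is diffeomorphic to $\ell\le M/2$ parallel strips. In the second case all $s_i$ agree (opposite signs would violate $\sigma^2\ge\sum_i l_i(\bar\kappa_i-\bar\kappa_E)^2$ by a quantity $\gtrsim 1/M$), so after possibly replacing $E$ by $E^c$ the set $E$ is a disjoint union of topological near-disks with $\bar\kappa_i=2\pi/l_i$; writing $l_i=\bar l(1+\delta_i)$ with $\sum_i\delta_i=0$ and using $\min_x\sum_i l_i(2\pi/l_i-x)^2=\frac{4\pi^2}{\bar l}\sum_i\frac{\delta_i^2}{1+\delta_i}$, the inequality yields $\sum_i\delta_i^2=O(M\sigma^2)$, so the $l_i$ are mutually comparable; combining this with near-roundness ($|E_i|=\frac{l_i^2}{4\pi}+O(\sigma l_i^2)$), the constraint $\sum_i|E_i|\in\{m,1-m\}$ and the isoperimetric bound $P(E)\ge2\sqrt{\pi m}$ pins $\bar l\asymp_{m,M}1$ and the number of disks $d\le C(m,M)$. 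In either case, parametrising each $\gamma_i$ by arclength and integrating $\phi_i'=\kappa_E$ for the tangent angle realises $\partial E_i$ as a normal graph of a function $\psi_i$ over the relevant model piece with $\|\psi_i\|_{C^{1,\frac12}}=O(\sigma)$, vanishing as $\varepsilon\to0$; the centres of the model disks (resp.\ positions of the model lines) are chosen so that $\psi_i$ has zero average and the model volumes equal $|E_i|$. This adapts \cite{JulMorPonSpa} for the disk case and \cite{DeKu} for the strip case; the dichotomy and the parallelism of disjoint geodesics are the torus-specific points.

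\emph{Fuglede-type estimate.} Since $\partial E=\bigsqcup_i\partial E_i$ and $\kappa_E-\bar\kappa_E=(\kappa_E-\bar\kappa_i)+(\bar\kappa_i-\bar\kappa_E)$ has vanishing cross term on each $\partial E_i$, one has $P(E)=\sum_i\H^1(\partial E_i)$ and $\|\kappa_E-\bar\kappa_E\|_{L^2(\partial E)}^2\ge\sum_i\|\kappa_E-\bar\kappa_i\|_{L^2(\partial E_i)}^2$, so it suffices to bound $\H^1(\partial E_i)-P(\mathrm{model}_i)$ by $\|\kappa_E-\bar\kappa_i\|_{L^2(\partial E_i)}^2$ componentwise. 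Expanding to second order in $\psi_i$ gives, in Fourier coordinates, a perimeter deficit $\sim\sum_{|k|\ge2}(k^2-1)|\hat\psi_k|^2$ and $\|\kappa_E-\bar\kappa_i\|_{L^2(\partial E_i)}^2\sim\sum_{|k|\ge2}(k^2-1)^2|\hat\psi_k|^2$ over a disk — the modes $k=0,\pm1$ dropping from both, $k=0$ thanks to the volume constraint and $k=\pm1$ thanks to the choice of centre — and a deficit $\sim\sum_{k\ne0}k^2|\hat\psi_k|^2$ against $\|\kappa_E-\bar\kappa_i\|_{L^2(\partial E_i)}^2\sim\sum_{k\ne0}k^4|\hat\psi_k|^2$ over a geodesic, where $k=0$ is a translation of the line. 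The elementary gaps $(k^2-1)^2\ge3(k^2-1)$ for $|k|\ge2$ and $k^4\ge k^2$, with the bounds on the model radius ($\le M/2\pi$) and length ($\le M$) from the structural step, give, at leading order, deficit $\le C(m,M)\,\|\kappa_E-\bar\kappa_i\|_{L^2(\partial E_i)}^2$; the higher-order corrections, of size $O(\|\psi_i\|_{C^1}\|\psi_i\|_{H^1}^2)$, are absorbed because the leading quadratic form controls $\|\psi_i^\perp\|_{H^1}^2$ while the excluded low modes are $O(\|\psi_i\|_{H^1}^2)$. Summing over $i$ yields (i)--(ii); since the deficit is non-negative, the model perimeter is $\le P(E)\le M$, whence Theorem~\ref{teo alex} follows.

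\emph{Main obstacle.} The genuinely new difficulty compared with \cite{JulMorPonSpa} is the structural reduction in the presence of two competing model families: extracting the contractible/non-contractible dichotomy — and with it the number of components and the common model radius, or the identity $\bar\kappa_E=0$ — from the single scalar $\sigma=\|\kappa_E-\bar\kappa_E\|_{L^2(\partial E)}$, which forces the somewhat delicate bootstrap above (Lemma~\ref{GB}, the convexity identity, and the isoperimetric inequality), together with, in the strip case, upgrading ``curvature $\approx0$, total curvature $0$'' to a normal graph over a \emph{fixed} family of mutually parallel geodesics, the parallelism being the rigidity specific to the torus. By contrast the Fuglede computations are routine once the graph reduction is in place, and the strip case is in fact softer than the disk case, its only degenerate Fourier mode not even entering the perimeter.
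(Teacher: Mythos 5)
Your proposal is correct and follows essentially the same route as the paper: the Gauss--Bonnet dichotomy of Lemma~\ref{GB} combined with the $L^2$-smallness of $\kappa_E-\bar\kappa_E$ to force all boundary components into the same class (total curvature $0$ or $\pm 2\pi$), then a graph parametrization over equal disks (deferring to the known Fuglede-type analysis in the Euclidean case) or over parallel closed geodesics, with parallelism coming from disjointness. The only real divergence is in the strip case, where the paper bypasses your Fourier/spectral-gap computation: since the graph reduction already gives $\|f_i\|_{C^1(L_i)}\le C\,\|\kappa_E-\bar\kappa_E\|_{L^2(\partial E)}$ pointwise, the area formula yields $0\le P(E_i)-P(S_i)\le C\|f_i\|_{C^1(L_i)}^2$ directly, which is simpler than, but equivalent in outcome to, your second-order expansion with its error-absorption step.
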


\begin{proof} 
Let $E$ be as in the statement of the theorem, and assume that $\|\kappa_E-\bar \kappa_E\|_{L^2(\partial E)}= \varepsilon_0$, for $\e_0>0$ small enough depending only on $m,M$. \\
\noindent\textbf{Step 1:} Let $\{\Gamma_i: i\in \mathcal I\}$ be the connected components of $\bd E$, where $\mathcal{I} \subseteq \N$. We claim that
\begin{align}\label{compug}
\int_{\Gamma_i}\kappa_E=\int_{\Gamma_j}\kappa_E \qquad \forall i,j \in \mathcal I.
\end{align}
By contradiction, assume that there exist $i, j \in \mathcal{I}$ with  $i \ne j$ such that $\int_{\Gamma_i}\kappa_E \ne \int_{\Gamma_j}\kappa_E$.
By Lemma~\ref{GB}, we know that $|\int_{\Gamma_i} \kappa_E|\in \{0,2\pi\}$, for every $i \in \mathcal I$, hence we distinguish three cases.
We start with the case when $\int_{\Gamma_i}\kappa_E=0$ and $\int_{\Gamma_j}\kappa_E=2\pi$. Then it holds 
\begin{align}
&\int_{\Gamma_i}|\kappa_E|^2=\int_{\Gamma_i}\left |\kappa_E-\fint_{\Gamma_i} \kappa_E\right |^2 \le \int_{\Gamma_i }\left |\kappa_E-\bar \kappa_E\right|^2\le \varepsilon_0\label{est kappa^2},\\
&\int_{\Gamma_i \cup \Gamma_j}\left |\kappa_E-\frac{2\pi}{\mathcal{H}^1(\Gamma_i)+\mathcal{H}^1(\Gamma_j)}\right |^2=\int_{\Gamma_i \cup \Gamma_j}\left |\kappa_E-\fint_{\Gamma_i \cup \Gamma_j}\kappa_E\right |^2 \le\varepsilon_0\nonumber.
\end{align}
Therefore 
\begin{align*}
\int_{\Gamma_i}\left |\frac{2\pi}{\mathcal{H}^1(\Gamma_i)+\mathcal{H}^1(\Gamma_j)}\right |^2 \le 4\varepsilon_0,
\end{align*}
from which it follows that $\mathcal{H}^1(\Gamma_i) \le C(M) \varepsilon_0 $,
and this is a contradiction for $\varepsilon_0$ sufficiently small as $\mathcal H^1(\Gamma_i)\ge 1$ by Lemma~\ref{GB}.
The case when $\int_{\Gamma_i} \kappa_E=0$ and $\int_{\Gamma_j}\kappa_E=-2\pi$ is analogous. Finally we consider the case when $\int_{\Gamma_i}\kappa_E=2\pi$ and $\int_{\Gamma_j}\kappa_E=-2\pi$. In this case, we note that
\begin{align*}
\int_{\Gamma_i} \left |\frac{2\pi}{\mathcal{H}^1(\Gamma_i)}\right |^2 &\le 2 \int_{\Gamma_i}\left |\kappa_E-\frac{2\pi}{\mathcal{H}^1(\Gamma_i)}\right |^2 + 2 \int_{\Gamma_i }|\kappa_E|^2 \\
& \le 2\int_{\Gamma_i }\left |\kappa_E-\fint_{\Gamma_i }\kappa_E\right |^2 + 2 \int_{\Gamma_i \cup \Gamma_j}\left |\kappa_E-\fint_{\Gamma_i \cup \Gamma_j} \kappa_E\right |^2\\
& \le 4 \int_{\Gamma_i \cup \Gamma_j}\left |\kappa_E-\bar \kappa_E\right|^2\le 4\varepsilon_0.
\end{align*}
Thus, we obtain that $\pi^2 \le \varepsilon_0 \mathcal{H}^1(\Gamma_i) \le M \varepsilon_0,$ which is again a contradiction for $\varepsilon_0$ small enough.

\noindent\textbf{Step 2:} Thanks to Step 1, we can divide the proof into three cases. If $\int_{\Gamma_i}\kappa_E =2\pi,$ the lifting in $\R^2$ of the curve $\Gamma_i$  is the boundary of a set, thus reasoning as in~\cite{AFJM} we conclude assertion (i). If $\int_{\Gamma_i}\kappa_E =-2\pi,$ then $\int_{\Gamma_i}\kappa_{E^c} =2\pi$ and we conclude as above. We are then left with the case $\int_{\Gamma_i}\kappa_E =0.$ 
In this case, we show that each connected component $\Gamma_i$ of $\bd E$ can be written as a small perturbation of a line. 

Let $\ell_i=\mathcal{H}^1(\Gamma_i)$.  Since $\Gamma_i$ is regular, it can be parametrized  as a unit-speed curve  $\gamma : [0,\ell_i] \rightarrow \R^2$, $s \mapsto  (x(s),y(s))$. Define $L(s)=\int_0^{s} \kappa_{E}(\gamma(\tau)) d\tau,$ and note that $L(\ell_i)=\int_{\Gamma_i}\kappa_E=0$. By \eqref{est kappa^2}, it holds
\begin{align}\label{eccoqua}
|L(s)|^2 \le s  \int_{\Gamma_i}|\kappa_{E}|^2 \le s \varepsilon_0^2.
\end{align}
Furthermore, by direct computation, we have
\begin{equation}\label{len}
\begin{split}
x'(s)&=x'(0)\operatorname{cos}(L(s))+y'(0)\operatorname{sin}(L(s)),\\
y'(s)&=y'(0)\operatorname{cos}(L(s))-x'(0)\operatorname{sin}(L(s)).
\end{split}
\end{equation}
We now lift the curve to $\R^2$ and take $n_1, n_2\in\Z$ with the smallest modulus such that 
\begin{align}\label{close}
x(0)+n_1=x(\ell_i) \hspace{3mm} \text{and} \hspace{3mm}y(0)+n_2=y(\ell_i),
\end{align}
which exist since $\Gamma_i$ has finite length in $\T^2.$
Combining \eqref{eccoqua} and \eqref{len}, by Taylor expansion, we get,  for all $s\in [0,\ell_i]$, 
\begin{align*}
\left| {x}'(s)-x'(0)\right| \le C \varepsilon_0, \qquad
\left| {y}'(s)-y'(0)\right| \le C \varepsilon_0,
\end{align*}
where the constant $C>0$ only depends on $M$.
We now integrate the inequalities above in $(0,s)$ to find
\begin{align*}
\left| {x}(s)-{x}(0)-x'(0)s\right| \le C \varepsilon_0,\qquad 
\left| {y}(s)-{y}(0)-y'(0)s\right| \le C \varepsilon_0,
\end{align*}
from which it follows that $\Gamma_i$ is parametrized by a small perturbation of a line. 
Letting $s=\ell_i$ in the above estimates, by \eqref{close}, we obtain
\begin{align*}
\left| n_1-l_ix'(0)\right| \le C \varepsilon_0, \qquad
\left| n_2-l_iy'(0)\right| \le C \varepsilon_0.
\end{align*}
Hence we conclude that there exists $\sigma : [0,\ell_i] \rightarrow \R^2$ such that
\begin{align*}
{x}(s)&=x(0)+\frac{n_1}{\ell_i}s+ \sigma_1(s), \hspace{4mm}
{y}(s)=y(0)+\frac{n_2}{\ell_i}s+ \sigma_2(s),
\end{align*}
{and moreover} 
\begin{equation}\label{eq:bds}
\|\sigma(s)\|_{\infty}+\|\sigma'(s)\|_{\infty} \le C \varepsilon_0.
\end{equation}
By the implicit function theorem (see also~\cite[Lemma 3.4]{KimKwo}), the curve $\Gamma_i$ can be parametrized as a graph over the line $L_i$ with slope $n_1/n_2$ (where $L_i$ is a line parallel to $x=0$ when $n_2=0$)  and passing through $(x(0),y(0))$ with height function $f_i \in C^{1,\frac{1}{2}}(L_i)$ with 
\begin{align}\label{eq:para}
\|f_i\|_{C^1(L_i)} \le C \varepsilon_0.
\end{align}
Since $\Gamma_i$ does not bound, there exists a unique $\tilde{\Gamma}_i$ such that $\Gamma_i \cup \tilde{\Gamma}_i$ is the boundary of a connected component $E_i$ of $E.$ By the previous arguments, also $\tilde{\Gamma}_i$ is a small $C^{1,\frac12}$-deformation of a line $\tilde{L}_i$ parallel to $L_i$. By parallel shifting  the curves $\Gamma_i$ and $\tilde{\Gamma}_i$, we can find a strip $S_i$  with boundary $L_i \cup \tilde{L}_i$ and $|E_i|=|S_i|$. 
Thanks to \eqref{eq:bds}, 
\eqref{eq:para} still holds, possibly increasing $C.$
By the area formula, it is easy to see that (see e.g.~\cite[Lemma 3.1]{DeKu})
$$0\le P(E_i)-P(S_i) \le C\|f_i\|_{C^1(L_i)}^2 \le C \varepsilon_0^2=C\|\kappa_E - \bar \kappa_E\|^2_{L^2(\bd E)}.$$
We can then sum over all the connected components (whose number is bounded by $M/2$) to get the desired result. This completes the proof.

\end{proof}

\section{Asymptotic Behavior of the Mullins-Sekerka Flow}

In this section we apply the quantitative Alexandrov Theorem (Theorem~\ref{teo alex}) to show the asymptotic of the Mullins-Sekerka flow.
We start by recalling the definition of \textit{flat flow} solutions of the Mullins-Sekerka flow, as given in~\cite{LS,Rog}.

Let $E\subseteq \T^2$ be  a measurable set with $|E|=m$, and consider the minimization problem
\begin{equation}
\label{minimumpb}
    \min \left\lbrace P(F)+\frac{h}{2} \int_{\T^2} |\nabla U_{F,E}|^2 \ud x : |F|=|E|\right\rbrace,
\end{equation}
where $U_{F,E}\in H^1(\T^2)$ is the solution to
\begin{equation}\label{eq:potential1}
    -\Delta U_{F,E} =\frac{1}{h}\left(\chi_F-\chi_E\right)\quad \text{in } \T^2
\end{equation}
with zero average. Let us set
 \beq
\label{def:distance2}
\mathcal{D}(F,E) := \int_{\T^2} |\nabla U_{F,E}|^2  \ud x .
\eeq
By the results of~\cite{LS, Rog}, there exists a minimizer of \eqref{minimumpb}, which may not be unique.  Note also that, defining  the $H^{-1}$-norm by duality  as 
\[
\|f\|_{H^{-1}(\T^2)} :=  \sup \Big\{ \int_{\T^2} \varphi  f \ud x : \|\nabla \varphi\|_{L^2(\T^2)} \leq 1\Big\},
\]
pairing this definition with  \eqref{eq:potential1}  and integrating by parts, we get
\beq
\label{def:Hmenouno-bound}
\|\chi_F - \chi_E\|_{H^{-1}(\T^2)}^2  \leq  h^2 \, \| \nabla U_{F,E} \|_{L^2(\T^2)}^2 = h^2 \, \mathcal{D}(F,E).
\eeq

A \textit{discrete flow} to the Mullins-Sekerka flow can be defined by iteratively minimizing \eqref{minimumpb}.  Fix $h>0$, for every measurable open set $E(0)\subseteq \T^2$ we set $E_0^{(h)}= E(0)$ and 
\[  E_{k+1}^{(h)}\in \text{argmin}\left\lbrace P(F)+\frac{h}{2} \mathcal{D}(F,E_k^{(h)}) : |F|=m\right\rbrace. \]
A \textit{discrete flow}  $\{E^{(h)}(t)\}_{t \geq 0}$ is defined by setting
\[
E^{(h)}(t) = E_k^{(h)} \qquad \text{for }\, t \in [kh, (k+1) h),
\]
and a \textit{flat flow solution for the Mullins-Sekerka flow}  \eqref{eq:Mull-Sek}   is defined as any $L^1$-cluster point  $\{E(t)\}_{t \geq 0}$ of $\{E^{(h)}(t)\}_{t \geq 0}$ as $h\to 0$, that is 
\[
E^{(h_n)}(t) \to E(t) \quad \text{ in $L^1$ for a.e }\, t >0 \text{ and for some }h_n\to 0.
\]

Let $U_{k+1}^{(h)}$ denote the potential associated to $E_{k+1}^{(h)}$ as defined in \eqref{eq:potential1}.
Note that the sets $E_{k}^{(h)}$ are $C^{3,\alpha}$-regular, as noted in~\cite{JulMorPonSpa},
and they satisfy the Euler-Lagrange equation associated to \eqref{minimumpb}, namely,  
\beq \label{eq:Euler-Lagr2}
U_{k+1}^{(h)} = -\kappa_{E_{k+1}^{(h)}} + \lambda_{k+1}^{(h)} \qquad \text{on }\, \bd E_{k+1}^{(h)}.
\eeq
Testing the minimality of $E_{k+1}^{(h)}$ with $E_{k}^{(h)}$, one obtains
\beq \label{eq:energy-compa2}
P(E_{k+1}^{(h)})  +\frac{h}{2}  \mathcal{D}(E_{k+1}^{(h)}, E_{k}^{(h)}) \leq P(E_{k}^{(h)}).
\eeq

Following the proof of~\cite[Proposition 3.1]{Rog}, we  conclude the existence of a \textit{flat flow} with initial datum $E(0)$, satisfying $P(E(t)) \leq P(E(0))$ and $|E(t)| = |E(0)|$ for every $t \geq 0$.
Moreover, the family  $\{E(t)\}_{t\geq 0}$ satisfies the equation \eqref{eq:Mull-Sek} in a weak sense.

Before proving our main result, we need some preliminary results.  We use the following notation: for $r>0$, the set $E_r$ denotes the $r$-neighbourhood  of $E$, i.e., $E_r:=\{x\in\T^2 : \text{dist}(x,E)\le r\}$.

\begin{lemma}\label{lemma Max}
    Let $0<m <1$ and $M>0$. For every $0<\eta<1-m$, there exists $r_0=r_0(\eta,m,M)$ with the following property: 
    for every connected set $E\subseteq \T^2$ of class $C^3$ with $|E|= m$ and $P(E)\le M$,  we have   $|E_r|<1-\eta$ for all $r\le r_0$.
\end{lemma}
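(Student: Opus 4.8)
The statement is a quantitative non-concentration estimate: a connected set of fixed area $m<1$ and bounded perimeter cannot, after a small dilation, fill up almost all of $\T^2$. The natural strategy is to control the growth of $|E_r|$ in terms of $r$ by an isoperimetric-type argument. The key observation is that the function $r\mapsto |E_r|$ is Lipschitz (indeed monotone non-decreasing) and its derivative is controlled by the perimeter of the level sets: by the coarea formula applied to the distance function $d_E(x):=\dist(x,E)$, which is $1$-Lipschitz, one has $\frac{\ud}{\ud r}|E_r|=\H^1(\partial E_r)$ for a.e. $r>0$ (more precisely $\H^1(\partial^* E_r)$, and $E_r$ is a set of finite perimeter for a.e. $r$). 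So the plan is first to bound $P(E_r)=\H^1(\partial E_r)$ uniformly for $r$ in a fixed range, and then integrate.

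\textbf{Bounding the perimeter of the neighbourhood.} The main step is to show that $P(E_r)\le C(m,M)$ for all $r\le r_0$, for a suitable $r_0$ depending only on $m,M$ (and then one chooses $r_0$ smaller if needed). Here I would use that $E$ is connected with $P(E)\le M$: the diameter of $E$ in the intrinsic sense is controlled, and $E_r$ is contained in a bounded neighbourhood. One clean route is the following. Since $E$ is connected and $\T^2$ has unit side, for $r$ small the set $E_r$ is still "not too spread out"; quantitatively, one can use the relative isoperimetric inequality on the torus together with the constraint $|E_r|<1$ to get $\H^1(\partial E_r)\ge c\min(|E_r|,1-|E_r|)^{1/2}$, but this gives a lower bound, not the upper bound we need. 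Instead, for the upper bound on $P(E_r)$ I would argue that $E_r$ is a tubular-type enlargement and appeal to the general fact that for a set of finite perimeter $E$ with $P(E)\le M$, one has $P(E_r)\le P(E)+Cr\cdot(\text{something})$ only in convex settings — this is false in general. The robust replacement is to use the monotonicity of the perimeter under the constraint: actually, since $E_r\supseteq E$, and we can run the \emph{same} flat-flow / minimizing argument, but more simply: pick $r_1=r_1(m,M)$ so small that $|E_{r_1}|\le \frac{1+m}{2}<1$ — this is where we must \emph{prove} such an $r_1$ exists, which is essentially the content of the lemma. So this circular-looking point is the heart of the matter, and the correct tool is a \emph{quantitative} isoperimetric/density estimate: if $|E_r|$ were close to $1$ while $|E|=m$ is bounded away from $1$, then on the interval $[0,r]$ the area must increase by at least $(1-\eta)-m>0$, hence $\int_0^r \H^1(\partial E_s)\,\ud s \ge (1-\eta)-m$, forcing $\H^1(\partial E_s)\ge \frac{(1-\eta)-m}{r}$ on a set of positive measure of radii $s\in[0,r]$; but for $|E_s|$ in the range $[m,1-\eta]$ the relative isoperimetric inequality gives $\H^1(\partial E_s)\ge c(\eta,m)>0$ as a lower bound, which is consistent, so again the lower bound alone is not enough.

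\textbf{The decisive estimate.} Thus the real mechanism must be an \emph{upper} bound on $\H^1(\partial E_s)$ that blows up only as $s$ grows, combined with connectedness. Here is the argument I would actually carry out: since $E$ is connected, closed (WLOG, as $E$ is $C^3$) and $P(E)\le M$, its $\H^1$-measure of the boundary is $\le M$ and $E$ has at most a bounded number of "ends"; more to the point, $\partial E_r\subseteq \{d_E=r\}$, and each connected component of $\partial E$ contributes, after enlargement by $r$, a curve of length at most $(\text{length of the component})+2\pi r$ in the Euclidean picture, because enlarging a $C^2$ curve by $r$ on one side increases its length by at most $2\pi r$ (the total turning is $\le 2\pi$ by Lemma~\ref{GB}) — and on the torus one must lift, but for $r$ smaller than a universal constant the local picture is Euclidean. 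Hence $\H^1(\partial E_r)\le P(E)+2\pi r \cdot(\#\text{components})\le M + C(M)r \le 2M$ for $r\le r_0(M)$. Now integrate: $|E_r| = |E| + \int_0^r \H^1(\partial E_s)\,\ud s \le m + 2M r$. Choosing $r_0 = \min\{r_0(M),\ \frac{1-\eta-m}{2M}\}$ (and noting $1-\eta-m>0$) we get $|E_r|\le m + (1-\eta-m) = 1-\eta$ for all $r\le r_0$, which even gives the non-strict inequality; to get the strict inequality $<1-\eta$ as stated, replace the factor by something like $(1-\eta-m)/(3M)$ or note the bound is strict unless $|E|=m$ and the curve is degenerate.

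\textbf{Main obstacle.} The delicate point is the bound $\H^1(\partial E_r)\le M + C(M)r$: the naive "enlarge each boundary curve by $r$" argument is clean for a single convex curve but needs care when components merge or self-overlap as $r$ grows, and when wrapping around the torus. I expect the cleanest rigorous route is: (a) for $r\le r_0$ universal, work on the universal cover and note $E_r$ is a disjoint union of enlargements of the lifts; (b) use that for a set of finite perimeter, $s\mapsto \H^1(\partial E_s)$ is essentially non-increasing after a short initial interval is false in general, so instead (c) directly use the coarea/BV-coarea inequality: $\int_0^{r_0}\H^1(\partial E_s)\,\ud s = |E_{r_0}\setminus E| \le 1$, which trivially bounds the integral, and combine with the relative isoperimetric inequality which forces $\H^1(\partial E_s)\ge c(\eta,m)$ whenever $m\le |E_s|\le 1-\eta$; if this held on all of $[0,r_0]$ we'd need $c(\eta,m) r_0 \le 1$, no contradiction — so the contradiction must instead come from the \emph{total turning} bound via Lemma~\ref{GB} limiting how fast the area can grow, i.e. an upper bound $\H^1(\partial E_s)\le \H^1(\partial E) + Cs$ which I will establish by the lifting argument. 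I would present the proof in that order: reduce to the universal cover, establish the length bound $\H^1(\partial E_s)\le P(E)+Cs$ using connectedness and Lemma~\ref{GB}, apply the coarea formula to integrate, and finally choose $r_0$ explicitly.
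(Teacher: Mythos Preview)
Your approach via the coarea formula is genuinely different from the paper's, and the step you single out as ``the decisive estimate'' has a real gap. You want $\H^1(\partial E_s)\le P(E)+Cs$ with $C=C(m,M)$, arguing that ``enlarging a $C^2$ curve by $r$ on one side increases its length by at most $2\pi r$ (the total turning is $\le 2\pi$ by Lemma~\ref{GB}).'' But the length formula for the one-sided equidistant curve, $\ell(s)=P(E)+s\int_{\partial E}\kappa$, is only valid while the equidistant curve stays embedded, i.e.\ for $s$ below the reach of $\partial E$. The reach is controlled by $1/\|\kappa_E\|_\infty$ and by the self-distance of $\partial E$, neither of which is bounded below in terms of $m,M$ alone: a $C^3$ set with $|E|=m$ and $P(E)\le M$ can have arbitrarily large curvature. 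For $s$ beyond the reach, $\partial E_s$ is no longer the smooth pushed-out curve---components merge, swallowtails appear---and neither your lifting argument nor Lemma~\ref{GB} (which only gives the \emph{signed} total curvature, not $\int|\kappa|$) controls its length. You flag exactly this obstacle but never resolve it. There is also a secondary issue in your line $\H^1(\partial E_r)\le P(E)+2\pi r\cdot(\#\text{components})\le M+C(M)r$: the number of connected components of $\partial E$ is \emph{not} bounded by $M$ even when $E$ itself is connected (take a disk with many tiny holes removed).

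The paper avoids all of this with a completely elementary covering argument that never looks at $\partial E_s$. It notes $E_r\setminus E\subseteq \bigcup_{x\in\partial E}D_r(x)$, picks a maximal $r$-separated net $\{x_1,\dots,x_k\}\subset\partial E$ so that $\partial E\subseteq\bigcup_i D_r(x_i)$, uses path-connectedness of $E$ (together with $E\not\subseteq D_r(x_i)$) to argue that each disjoint ball $D_{r/2}(x_i)$ carries at least $r$ of perimeter---hence $k\le M/r$---and concludes $|E_r|\le m + k\cdot\pi(2r)^2\le m+4\pi Mr$. Choosing $r_0=\min\{\tfrac{1-m-\eta}{8\pi M},\tfrac18\}$ then finishes. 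This is precisely the linear-in-$r$ growth bound you were after, obtained directly on the area rather than through the coarea formula and a pointwise bound on $\H^1(\partial E_s)$.
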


\begin{proof}
    Let $E\subseteq \T^2$ be a connected set of class $C^3$ with $|E|= m$ and $P(E)\le M$. For given $0< \eta <1-m,$ set $r_0=\min\{\frac{1-m-\eta}{8\pi M}, \frac{1}{8}\}>0.$ Fix $r \le r_0.$ If there exists $x\in\bd E$ such that $E \subseteq D_r(x),$ then there is nothing to prove. Otherwise, we note that by compactness of $\partial E$ in $\T^2$, there exists a finite set of points $\{x_1, \dots ,x_k\}\subseteq \bd E$ such that
    \begin{equation*}
        |x_i-x_j|\ge r, \qquad \text{for every } i\ne j,
    \end{equation*}
    and  
    \begin{equation}\label{cover}
        \bd E\subseteq  \bigcup_{i=1}^k D_{r}(x_i).
    \end{equation}
    Since the balls $D_{\frac{r}{2}}(x_i)$ are disjoint, we have
    \begin{equation}\label{per1}
         P(E) \ge \sum_{i=1}^k P(E; D_{\frac r2}(x_i)).
    \end{equation}
    Moreover, since $ E$ is path-connected, for every $x\in\bd E$, it holds
    \[ P(E; D_{\frac r2}(x))\ge r, \]
    therefore \eqref{per1} entails 
    \begin{equation}\label{bound k}
        k \le \frac M{r}.
    \end{equation}
    By definition the $r$-neighbourhood of $E$ is the set $ E_r=E\cup \bigcup_{x\in\bd E} D_r(x)$,    thus by \eqref{cover} we deduce
    \[ E_r\subseteq E \cup \bigcup_{i=1}^k D_{2r}(x_i). \]
    Finally, the inclusion above, coupled with \eqref{bound k}, yields 
    \[ |E_r|\le m + 4k\pi r^2\le  m + 4 M\pi r,  \]
    from which the conclusion follows.
\end{proof}

The second lemma we need is in the spirit of~\cite[Lemma 2.1]{RogSch}.

\begin{lemma}\label{lemma4.1JMPS}
    Given $0<m<1$, $M>0$, let $E\subseteq \T^2$ be an open set of class $C^3$, with $|E|=m$ and $P(E)\le M$, and let $u_E \in C^1(\T^2)$ be a function with zero average such that $\|\nabla u_E\|_{L^2(\T^2)}\le M$ and 
    \begin{equation}\label{elems}
        \kappa_E=-u_E+\lambda \,\,\, \textnormal{on} \,\,\, \bd E \,\,\, \textnormal{for some}\,\,\, \lambda \in \R.
    \end{equation}
    Then it holds 
    \begin{equation}\label{4.9JMPS}
        \sup_{x \in \T^2,\rho >0} \frac{\mathcal{H}^1(\bd E\cap D_{\rho}(x))}{\rho}\le K,
    \end{equation}
    where $K>0$ depends only on $m$ and $M.$
\end{lemma}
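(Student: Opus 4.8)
The plan is to establish the density bound \eqref{4.9JMPS} by a standard comparison/competitor argument, exploiting the minimality encoded in the Euler-Lagrange equation \eqref{elems} together with the global bounds $|E|=m$, $P(E)\le M$ and $\|\nabla u_E\|_{L^2}\le M$. First I would reduce to the regime $\rho$ small: for $\rho$ bounded below by some fixed $\rho_0(m,M)$ the quotient $\mathcal H^1(\bd E\cap D_\rho(x))/\rho$ is automatically controlled by $P(E)/\rho_0\le M/\rho_0$, so it suffices to prove \eqref{4.9JMPS} for $\rho\le\rho_0$. Fix such a $\rho$ and a point $x\in\T^2$. Following the scheme of~\cite[Lemma 2.1]{RogSch}, I would test the variational problem by comparing $E$ with the competitor obtained by cutting $E$ inside the disk $D_\rho(x)$, i.e. $F:=E\setminus D_\rho(x)$ (or $E\cup D_\rho(x)$, depending on which modification decreases the relevant quantity). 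The issue is that $E$ is not itself a minimizer of a clean functional; rather, \eqref{elems} says $\kappa_E=-u_E+\lambda$ on $\bd E$, which is the first-variation identity. So the correct route is to integrate this identity against the vector field generating the perturbation by the disk, i.e. to use that for any competitor $F$ with $|F|=|E|$ one has, to first order, $P(E)\le P(F)+\int_{E\triangle F} (\text{something controlled by }u_E+\lambda)$; more robustly, one uses the comparison directly on the discrete minimizer but here we are given the abstract consequence \eqref{elems}, so I would argue via the divergence theorem: for the set $G:=E\cap D_\rho(x)$, the relative isoperimetric-type inequality together with $\int_{\bd^* G}\kappa$-type estimates gives $\mathcal H^1(\bd E\cap D_\rho(x))\le \mathcal H^1(\bd D_\rho(x)\cap E)+\big(\text{error}\big)$.

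In more detail, the key computation is this. Set $g(\rho):=\mathcal H^1(\bd E\cap D_\rho(x))$ and $V(\rho):=|E\cap D_\rho(x)|$, so that $V'(\rho)=\mathcal H^1(E\cap \bd D_\rho(x))$ for a.e.\ $\rho$ by the coarea formula. Comparing $E$ with $E\setminus D_\rho(x)$ (adjusting the volume by a far-away modification costing $O(\text{volume})$ in perimeter, which is harmless because $\text{volume}\le |D_\rho(x)|=\pi\rho^2$) and using minimality through \eqref{elems}, one obtains an inequality of the form
\begin{equation*}
g(\rho)\le V'(\rho)+C\rho^2+C\int_{\bd E\cap D_\rho(x)}|u_E-\lambda|\,\ud\H^1,
\end{equation*}
where $C=C(m,M)$; the last term is then handled by Cauchy-Schwarz, the trace/interpolation inequality on $\bd E$, and the bound $\|\nabla u_E\|_{L^2}\le M$ to get $\int_{\bd E\cap D_\rho(x)}|u_E-\lambda|\le C\sqrt{g(\rho)}\cdot\sqrt{\rho}+C\rho$ (the constant $\lambda$ is itself bounded by $M$-dependent quantities since $\bar\kappa_E$ is, using $P(E)\le M$ and $\H^1(\bd E)$ bounded below — here the connectedness/Lemma~\ref{GB}-type structure or simply $\int_{\bd E}\kappa_E=\lambda\H^1(\bd E)-\int u_E$ bounds $\lambda$). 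Plugging in and using Young's inequality absorbs the $\sqrt{g(\rho)}$ term, yielding
\begin{equation*}
g(\rho)\le 2V'(\rho)+C\rho.
\end{equation*}

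The final step is a differential-inequality/De Giorgi iteration argument. Since $2\pi\rho= \mathcal H^1(\bd D_\rho(x))\ge \mathcal H^1(\bd(E\cap D_\rho(x)))-g(\rho)\ge$ (isoperimetric) $\ge c\,V(\rho)^{1/2}-g(\rho)$ is not quite what I want; instead I combine $g(\rho)\le 2V'(\rho)+C\rho$ with the relative isoperimetric inequality in the disk, $\min\{V(\rho), |D_\rho(x)|-V(\rho)\}^{1/2}\le C\big(g(\rho)+V'(\rho)\big)$, to close a Gronwall-type inequality for $V$. Concretely, in the "small volume" alternative one gets $V(\rho)^{1/2}\le C V'(\rho)+C\rho$, hence $V(\rho)\le C\rho^2$, and then $g(\rho)\le 2V'(\rho)+C\rho\le C\rho$ after integrating (or directly from $V(\rho)\le C\rho^2 \Rightarrow V'(\rho)\le C\rho$ on a density point). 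The main obstacle I anticipate is making the comparison step fully rigorous given that we only have the Euler-Lagrange identity \eqref{elems} rather than genuine minimality: one must carefully convert \eqref{elems} into a usable quantitative inequality, controlling the Lagrange multiplier $\lambda$ and the volume-fixing correction, and ensure all constants depend only on $m$ and $M$. This is exactly the technical heart of~\cite[Lemma 2.1]{RogSch}, adapted to the torus, and I would follow that argument closely.
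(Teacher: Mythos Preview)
Your approach has a genuine circularity at the step where you bound $\lambda$. You propose to use $\int_{\bd E}\kappa_E=\lambda\,\H^1(\bd E)-\int_{\bd E}u_E$, but controlling the trace integral $\int_{\bd E}u_E$ (or $\int_{\bd E\cap D_\rho}|u_E|$ in your differential inequality) from the sole hypothesis $\|\nabla u_E\|_{L^2(\T^2)}\le M$ requires exactly an estimate of the type \eqref{4.12JMPS}, and that estimate rests on the density bound \eqref{4.9JMPS} you are trying to prove. In two dimensions $H^1$ does not embed into $L^\infty$, so there is no shortcut here. A second issue is that the competitor inequality $g(\rho)\le V'(\rho)+C\rho^2+\dots$ does not follow from the first-variation identity \eqref{elems}: comparing $P(E)$ with $P(E\setminus D_\rho(x))$ needs genuine (almost-)minimality, which is not part of the hypotheses.

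The paper's proof avoids both problems by bounding $\lambda$ without ever taking a trace of $u_E$. One first uses Lemma~\ref{lemma Max} to guarantee that a small neighbourhood $E_r$ leaves a definite portion of $\T^2$ uncovered; this allows one to build a smooth zero-average function $f$ with $f\equiv 1$ on $E$, solve $\Delta v=f$ on $\T^2$, and test the weak form \eqref{eq EL} with $X=\nabla v$. The divergence theorem then converts $\int_{\bd E}(-u_E+\lambda)\,X\cdot\nu_E$ into the \emph{volume} integral $-\int_E\div(u_E X)+\lambda|E|$; the first term is bounded by $\|u_E\|_{H^1(\T^2)}\|X\|_{C^1}$ via Poincar\'e and Schauder, while the second isolates $\lambda m$. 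With $\lambda$ bounded in terms of $m,M$ only, the density estimate \eqref{4.9JMPS} then follows by invoking \cite[Lemma~2.1]{Sch}. The construction of this test vector field, and the use of Lemma~\ref{lemma Max} to make it possible on the torus, is precisely the idea your sketch is missing.
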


\begin{proof}
    By \eqref{elems}, for every $X \in C^1(\T^2,\R^2)$ it holds
    \begin{equation}\label{eq EL}
        \int_{\bd E} \div_{\tau} X \ud \mathcal{H}^1=\int_{\bd E} \kappa_E X \cdot \nu_E \ud \mathcal{H}^1= \int_E \div((-u_E+\lambda)X) \ud x.
    \end{equation}
 Here, $\div_{\tau} X$ denotes the tangential divergence of $X$ on $\partial E$ and $\nu_E$ denotes the outward unit normal vector.  In view of~\cite[Lemma 2.1]{Sch}, to prove the lemma, it suffices to bound the Lagrange multiplier $\lambda.$
    We suppose that $E$ is connected, as the number of connected components of $E$ is uniformly bounded by a constant only depending on $m$ and $M$ and the following arguments can be repeated on each component. By Lemma~\ref{lemma Max}, for $r=\min\{\frac 18, \frac{1-m}{16\pi M}\}>0$ we have $|E_{r}|<1-\frac{1-m}2$. Consider 
    \[ f= \rho_{\frac r2}\ast ( \chi_{E_r}-c_0 \chi_{\T^2\setminus E_r}),  \]
    where $c_0= \frac{|E_r|}{1-|E_r|}  \in (0,\infty)$ so that $f$ has zero average in $\T^2$, and   $\rho_{\frac r2}$ is the rescaled standard mollifier.
    We then define $v$ as the solution with zero average to 
    \begin{equation}\label{def u}
        \Delta v=  f \qquad \text{in }\T^2.
    \end{equation}
    By standard Schauder estimates (see for instance~\cite{GiaMar}), it holds  
    \[ \|   v\|_{C^{2,\alpha}(\T^2)}\le C ( \|\nabla v\|_{L^2(\T^2)} +\| f\|_{C^\alpha(\T^2)})\le C(m,M) \]
    provided we show 
    \[ \|\nabla v\|_{L^2(\T^2)} \le  C(m,M).\]
    This  can be done by integrating by parts and using the definition of $v$
    \[  \|\nabla v\|_{L^2(\T^2)}^2=-\int_{\T^2} \Delta v \, v=\int_{\T^2}  v\, f\le C(m,M)\|v\|_{L^2(\T^2)}\le C(m,M)  \|\nabla v\|_{L^2(\T^2)},   \]
    where the last inequality follows from Poincaré inequality.
    
    We then use \eqref{eq EL} with the choice  $X=\nabla v$ to get
    \begin{equation*}
        \int_{\partial E} \div_{\tau} X \ud \mathcal{H}^1 = \int_E \div ((-u_E+\lambda)X) \ud x = -\int_E \div (u_E X) \ud x + \lambda |E|.
    \end{equation*}
  Here, we have used that $\div X= \Delta v=f = 1$ in $E.$ 
  Noting that 
    \begin{align*}
        &\left |\int_{\partial E} \div_{\tau} X \ud \mathcal{H}^1 \right |\le P(E) \|\nabla X\|_{C^0(\T^2)} \le C(m,M) P(E),\\
        &\left |  \int_{ E} \div \left( u_E X \right) \ud x \right | \le \|u_E\|_{H^1(E)} \| X\|_{C^1(\T^2)} \le  C(m,M) \|u_E\|_{H^1(E)},
    \end{align*}
    {we conclude that 
    \begin{equation}\label{bound lambda}
        |\lambda\|E|\le C(m,M)\left( P(E) +\|u_E\|_{H^1(E)} \right),
    \end{equation}
    which, in turn, gives a uniform bound on the curvature by the Euler-Lagrange equation \eqref{eq EL}. We conclude by applying~\cite[Lemma 2.1]{Sch}.}
\end{proof}


We recall that, if  $E$ satisfies \eqref{4.9JMPS}, then by~\cite[Theorem 4.7]{MZ} it holds
\begin{equation}\label{4.12JMPS}
    \left | \int_{\partial E}\varphi \ud \mathcal{H}^1 \right | \le C \|\varphi\|_{H^{1}(\T^2)}, \quad \forall \varphi \in C^1(\T^2),
\end{equation}
where $C$ depends only on the constant $K$ of the previous lemma, and therefore on $m$ and $M$. 
This trace estimate is crucial for establishing the version of Theorem~\ref{prop alex} that we will apply in our setting.

\begin{prop} \label{prop:2DAle2}
 Given $0<m<1$, $M>0$, let $E\subset\T^2$ be an open set of class $C^3$, with $|E|=m$ and $P(E) \le M$, and let $u_E \in C^1(\T^2)$ be a function with zero average such that 
\[
\kappa_E = -u_E +\lambda \quad \text{on } \,  \bd E
\]
 for some $\lambda \in \R$. Then, there exist $\e_0=\e_0(m,M)\in (0,1)$ such that  if 
$\|\nabla u_E\|_{L^2(\T^2)} \leq \e_0$
then the set $E$ satisfies the conclusion of Theorem~\ref{prop alex}.
\end{prop}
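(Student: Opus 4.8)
The plan is to convert the hypothesis on $\|\nabla u_E\|_{L^2(\T^2)}$ into the smallness of $\|\kappa_E-\bar\kappa_E\|_{L^2(\bd E)}$ and then invoke Theorem~\ref{prop alex} verbatim. First, enlarging $M$ if necessary so that $M\ge 1$ and asking $\e_0\le M$, the pair $(E,u_E)$ meets the hypotheses of Lemma~\ref{lemma4.1JMPS}: $E$ is open, of class $C^3$, $|E|=m$, $P(E)\le M$, and $u_E\in C^1(\T^2)$ has zero average, obeys $\|\nabla u_E\|_{L^2(\T^2)}\le\e_0\le M$ and the Euler--Lagrange identity $\kappa_E=-u_E+\lambda$ on $\bd E$. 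Lemma~\ref{lemma4.1JMPS} then yields the density bound~\eqref{4.9JMPS} with $K=K(m,M)$, hence the trace estimate~\eqref{4.12JMPS} with $C=C(m,M)$; moreover, since $0<m<1$, the isoperimetric inequality on $\T^2$ gives $P(E)\ge c_0(m)>0$.

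The core of the argument is to bound $\|\kappa_E-\bar\kappa_E\|_{L^2(\bd E)}$ by $\|\nabla u_E\|_{L^2(\T^2)}$. Put $c:=\fint_{\bd E}u_E$; since $\bar\kappa_E=\fint_{\bd E}\kappa_E=-c+\lambda$, we have $\kappa_E-\bar\kappa_E=-(u_E-c)$ on $\bd E$, with $u_E-c\in C^1(\T^2)$. As $u_E$ has zero average, Poincar\'e's inequality on $\T^2$ gives $\|u_E\|_{L^2(\T^2)}\le C\|\nabla u_E\|_{L^2(\T^2)}\le C\e_0$, so $\|u_E\|_{H^1(\T^2)}\le C\e_0$; applying~\eqref{4.12JMPS} with $\varphi=u_E$ and using $P(E)\ge c_0(m)$ we get $|c|=\tfrac1{\H^1(\bd E)}\bigl|\int_{\bd E}u_E\,\ud\H^1\bigr|\le\tfrac{C}{P(E)}\|u_E\|_{H^1(\T^2)}\le C(m,M)\e_0$, whence $\|u_E-c\|_{H^1(\T^2)}\le C(m,M)\e_0$. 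Feeding the function $(u_E-c)^2$ into the $W^{1,1}$-trace inequality for $\bd E$ — valid with constant $C(K)$ under the density bound~\eqref{4.9JMPS}, see~\cite{MZ} — together with $\nabla(u_E-c)^2=2(u_E-c)\nabla u_E$ and Cauchy--Schwarz, one obtains
\[
\|\kappa_E-\bar\kappa_E\|_{L^2(\bd E)}^2=\int_{\bd E}|u_E-c|^2\,\ud\H^1\ \le\ C(m,M)\,\|u_E-c\|_{L^2(\T^2)}\,\|u_E-c\|_{H^1(\T^2)}\ \le\ C(m,M)\,\e_0^2 .
\]
It then suffices to choose $\e_0=\e_0(m,M)\in(0,1)$, with $\e_0\le M$, so small that $C(m,M)\e_0$ is below the threshold $\varepsilon$ of Theorem~\ref{prop alex}; since $E$ is of class $C^2$ with $|E|=m$ and $P(E)\le M$, that theorem applies and yields precisely the asserted dichotomy and estimates.

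The main obstacle is this last trace step: the bare estimate~\eqref{4.12JMPS} controls only $\int_{\bd E}|\kappa_E-\bar\kappa_E|\,\ud\H^1$, so to reach an $L^2(\bd E)$ bound with a constant depending on $m,M$ only one must pass through the $W^{1,1}$-trace inequality applied to $(u_E-c)^2$, which is where the density bound of Lemma~\ref{lemma4.1JMPS} is genuinely used. If one wishes to avoid this refinement, an alternative is to prove the Proposition directly from the $L^1$ bound $\|\kappa_E-\bar\kappa_E\|_{L^1(\bd E)}\le C(m,M)\e_0$ — immediate from~\eqref{4.12JMPS} — by rerunning the proof of Theorem~\ref{prop alex}: its Step~1 contradictions survive with only $L^1$ smallness once one writes $\bigl|\int_{\Gamma_i}(\kappa_E-\bar\kappa_E)\bigr|\le\|\kappa_E-\bar\kappa_E\|_{L^1(\Gamma_i)}$ and uses $\int_{\Gamma_i}\kappa_E\in\{0,\pm2\pi\}$ from Lemma~\ref{GB}; the graph (respectively near-circle) constructions of Step~2 need only that $L(s)=\int_0^s\kappa_E$ be small, which again follows from the $L^1$ bound; and the final quadratic estimates are recovered from $\|\,\cdot\,\|_{L^1(\bd E)}^2\le\H^1(\bd E)\,\|\,\cdot\,\|_{L^2(\bd E)}^2\le M\,\|\,\cdot\,\|_{L^2(\bd E)}^2$. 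In either case, the point of Lemma~\ref{lemma4.1JMPS} is precisely to make all constants depend on $m$ and $M$ alone.
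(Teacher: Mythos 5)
Your argument is correct and follows essentially the same route as the paper: invoke Lemma~\ref{lemma4.1JMPS} to get the density bound, apply the trace estimate \eqref{4.12JMPS} to the square of the potential together with Poincar\'e's inequality, and then feed the resulting $L^2(\bd E)$ smallness of $\kappa_E-\bar\kappa_E$ into Theorem~\ref{prop alex}. The only difference is cosmetic: the paper avoids your detour through $c=\fint_{\bd E}u_E$ by using that $\bar\kappa_E$ minimizes $c\mapsto\int_{\bd E}|\kappa_E-c|^2$, so that $\int_{\bd E}|\kappa_E-\bar\kappa_E|^2\le\int_{\bd E}|\kappa_E-\lambda|^2=\int_{\bd E}u_E^2$, and then applies the trace inequality directly to $\varphi=u_E^2$.
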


\begin{proof} 
By Lemma~\ref{lemma4.1JMPS}, we can apply \eqref{4.12JMPS} with $\varphi = u_E^2 $ and obtain
\[
 \int_{\bd E} u_E^2\, \ud \H^1 \leq C \|u_E^2\|_{H^{1}(\T^2)} \leq C \|u_E\|_{H^1(\T^2)}^2 \leq C\| \nabla u_E\|_{L^2(\T^2)}^2, 
\]
where the last inequality follows from Poincar\'e inequality as $u_E$ is a function with zero average. By the assumption on $u_E$, if also $\|\nabla u_E\|_{L^2(\T^2)} \leq \e_0$, then it follows that 
\[
 \int_{\bd E} |\kappa_E - \okappa_E|^2 \, \ud \H^1\leq   \int_{\bd E} |\kappa_E - \lambda|^2 \, \ud \H^1 =  \int_{\bd E} u_E^2\, \ud \H^1\leq C\| \nabla u_E\|_{L^2(\T^2)}^2 \leq C \e_0^2. 
\]
Hence $E$ satisfies the hypothesis of Theorem~\ref{prop alex}. 
\end{proof}

Proposition~\ref{prop:2DAle2} implies the following corollary.

\begin{cor}\label{coro:2Dale2}
 Given $0<m<1$, $M>0$, let $E\subset\T^2$ be a set of class $C^3$, with $|E|=m$ and  $P(E)\le M$, and let $u_E \in C^1(\T^2)$ be a function with zero average such that $\kappa_E = -u_E +\lambda$ on $\bd E$ for some $\lambda \in \R$.  If $\delta_0>0$ and $P \in \mathcal{P}_{m,M}$ are such that $P \leq P(E) \leq P' - \delta_0$ for every $P < P' \in \mathcal{P}_{m,M}$, then it holds 
\[
P(E) - P  \leq C \|\nabla u_E\|_{L^2(\T^2)}^2, 
\]
for $C= C(m,M,\delta_0)$.
\end{cor}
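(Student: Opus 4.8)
\textbf{Proof plan for Corollary~\ref{coro:2Dale2}.} The plan is to combine the quantitative Alexandrov estimate of Proposition~\ref{prop:2DAle2} (equivalently, the refined conclusion of Theorem~\ref{prop alex}) with the second, ``gap'' part of Theorem~\ref{teo alex}. First I would observe that, by the trace estimate used in the proof of Proposition~\ref{prop:2DAle2}, the hypothesis $\|\nabla u_E\|_{L^2(\T^2)} \le \e_0$ gives $\|\kappa_E - \bar\kappa_E\|_{L^2(\bd E)}^2 \le C\|\nabla u_E\|_{L^2(\T^2)}^2 \le C\e_0^2$, so that $E$ satisfies the hypotheses of Theorem~\ref{teo alex}. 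Thus the inequality $\min_{P'' \in \mathcal P_{m,M}} |P(E) - P''| \le C\|\kappa_E - \bar\kappa_E\|_{L^2(\bd E)}^2 \le C\|\nabla u_E\|_{L^2(\T^2)}^2$ holds.

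The key step is then to upgrade this ``distance to the set $\mathcal P_{m,M}$'' into the one-sided bound $P(E) - P \le C\|\nabla u_E\|_{L^2(\T^2)}^2$ for the specific value $P$ in the statement. Here the hypothesis $P \le P(E) \le P' - \delta_0$ for all $P < P' \in \mathcal P_{m,M}$ is exactly what is needed: since $\mathcal P_{m,M}$ is a finite set, any element $P'' \in \mathcal P_{m,M}$ with $P'' \ne P$ satisfies either $P'' \ge P' \ge P(E) + \delta_0$ (if $P'' > P$) or $P'' \le P - \delta_1$ for some gap $\delta_1>0$ depending only on $m,M$ (if $P'' < P$); in either case $|P(E) - P''| \ge \min\{\delta_0,\delta_1\} =: \delta$. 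So if $\e_0$ is chosen small enough that $C\e_0^2 < \delta$ (with $C$ the constant from the first part, which depends only on $m,M$), then the minimizer in $\min_{P'' \in \mathcal P_{m,M}} |P(E) - P''|$ is forced to be $P$ itself, and moreover $P(E) \ge P$ by hypothesis removes the absolute value, yielding $P(E) - P \le C\|\nabla u_E\|_{L^2(\T^2)}^2$.

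Alternatively, and perhaps more cleanly, one can invoke directly the second assertion of Theorem~\ref{teo alex}: the hypothesis of that assertion is precisely ``$P \le P(E) \le P' - \delta_0$ for every $P < P' \in \mathcal P_{m,M}$'', and its conclusion is $P(E) - P \le C(m,M,\delta_0)\|\kappa_E - \bar\kappa_E\|_{L^2(\bd E)}^2$; combining with the trace bound $\|\kappa_E - \bar\kappa_E\|_{L^2(\bd E)}^2 \le C(m,M)\|\nabla u_E\|_{L^2(\T^2)}^2$ established in the proof of Proposition~\ref{prop:2DAle2} gives the result with $C = C(m,M,\delta_0)$. I would present the argument this way to avoid re-deriving the gap analysis. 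I do not expect any serious obstacle here; the only point requiring a little care is the choice of $\e_0$ small enough (depending on $m$ and $M$) so that $E$ genuinely falls into the regime where Theorem~\ref{teo alex} applies and the minimizing $P''$ is pinned down, but this is routine since $\mathcal P_{m,M}$ is finite and the relevant constants depend only on $m,M$.
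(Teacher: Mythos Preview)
Your proposal is correct and matches the paper's approach: the paper simply states that Corollary~\ref{coro:2Dale2} follows from Proposition~\ref{prop:2DAle2}, and your second route---combining the trace bound $\|\kappa_E-\bar\kappa_E\|_{L^2(\bd E)}^2\le C(m,M)\|\nabla u_E\|_{L^2(\T^2)}^2$ from the proof of Proposition~\ref{prop:2DAle2} with the second assertion of Theorem~\ref{teo alex}---is exactly the intended argument. Your remark that the large-$\|\nabla u_E\|$ regime is handled trivially (since $P(E)-P\le M$ while $\|\nabla u_E\|_{L^2}^2$ is bounded below) is the only missing detail, and you correctly flag it as routine.
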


We are now able to prove our main result concerning the Mullins-Sekerka flow, the proof follows the lines of~\cite[Theorem 1.3]{JulMorPonSpa}.

\begin{thrm}\label{main theorem 2}
    Let $0<m<1$ and let $\{E(t)\}_{t\ge 0}$ be a flat flow for the Mullins-Sekerka flow starting from a set of finite perimeter $E(0)\subseteq \T^2$ with $|E(0)|=m$. Then, the family $\{E(t)\}_{t\ge 0}$ has a unique $L^1$-limit point $E_\infty$, which takes one of the following forms:
    \begin{itemize}
        \item[(i)] $E_\infty$ is the union of $d\in\N$ disjoint open disks $D_r(x_1),\dots,D_r(x_d)$ with the same area and with $\pi r^2d=m$;
        \item[(ii)] $\T^2\setminus E_\infty$ is the union of $d\in\N$  disjoint open disks $D_r(x_1),\dots,D_r(x_d)$ with the same area and with $\pi r^2d=1-m$;
        \item[(iii)] $E_\infty$ is the union of $\ell \in\N$ disjoint parallel open strips $S_1,\dots, S_\ell$, possibly having different areas,  with $|\bigcup_{i=1}^\ell S_i|=m$.
    \end{itemize}
     In all cases the rate of convergence is exponential, i.e.,
        \begin{equation*}
        | E(t)\triangle E_\infty| \le C e^{-\frac tC},
    \end{equation*}
  where $C$ denotes a positive constant independent of $t$. 
\end{thrm}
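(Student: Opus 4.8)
The plan is to follow the now-standard scheme for asymptotics of flat flows via a quantitative Alexandrov inequality, as in~\cite{JulMorPonSpa,MPS}, adapting it to the periodic setting where strips enter as possible limits. First I would record the basic energy estimates inherited from~\eqref{eq:energy-compa2}: summing over $k$ gives $\sum_k \frac{h}{2}\mathcal{D}(E_{k+1}^{(h)},E_{k}^{(h)}) \le P(E(0))$, so that $P(E^{(h)}(t))$ is non-increasing, bounded by $P(E(0))=:M_0$, and the ``dissipation'' $\int_0^\infty \mathcal D(E^{(h)}(t),E^{(h)}(t-h))/h\,\ud t$ is bounded uniformly in $h$. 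Passing to the flat-flow limit, one gets that $t\mapsto P(E(t))$ is non-increasing (hence has a limit $P_\infty$ as $t\to\infty$), $|E(t)|\equiv m$, and $\int_0^\infty \|\nabla u_{E(t)}\|_{L^2(\T^2)}^2\,\ud t<\infty$ in a suitably interpreted sense; here $u_{E(t)}$ is the limit of the discrete potentials $U_{k+1}^{(h)}$, which by~\eqref{eq:Euler-Lagr2} satisfy the Euler--Lagrange relation $\kappa=-u+\lambda$ on the boundary. The point is that this forces $\|\nabla u_{E(t)}\|_{L^2(\T^2)}\to 0$ along a sequence $t_j\to\infty$.

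Next I would run a dichotomy argument on a dyadic sequence of time intervals $I_j=[t_j,t_j+1]$. On a suitable subsequence one finds $t_j$ with $\|\nabla u_{E(t_j)}\|_{L^2(\T^2)}\le \e_0$, so by Proposition~\ref{prop:2DAle2} the set $E(t_j)$ is, up to the $C^{1,1/2}$-small error controlled by $\|\nabla u_{E(t_j)}\|_{L^2}$, a union of equal disks, a union of disks in the complement, or a union of parallel strips, and moreover $P(E(t_j))$ is within $C\|\nabla u_{E(t_j)}\|_{L^2}^2$ of the corresponding element $P\in\mathcal{P}_{m,M_0}$. Since $\mathcal{P}_{m,M_0}$ is finite (Definition~\ref{definizione}) and $P(E(t))$ is non-increasing with limit $P_\infty$, after a finite time the perimeter is trapped in a gap: either $P(E(t))=P_\infty$ exactly for large $t$, or $P_\infty$ sits at a distance $\delta_0>0$ below the next-largest value of $\mathcal{P}_{m,M_0}$, in which case Corollary~\ref{coro:2Dale2} applies and gives the \emph{one-sided} bound $P(E(t))-P_\infty \le C\|\nabla u_{E(t)}\|_{L^2(\T^2)}^2$ for all large $t$. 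This is the key Łojasiewicz-type inequality: it says the perimeter excess is controlled by the dissipation rate.

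Then I would close the Gronwall loop. Formally $\frac{\ud}{\ud t}\big(P(E(t))-P_\infty\big) = -\|\nabla u_{E(t)}\|_{L^2(\T^2)}^2 \le -\tfrac1C\big(P(E(t))-P_\infty\big)$, so the perimeter excess decays like $e^{-t/C}$; in the flat-flow setting this has to be implemented discretely using~\eqref{eq:energy-compa2} and then passed to the limit, exactly as in~\cite[Section 4]{JulMorPonSpa}. Exponential decay of the perimeter excess, combined with~\eqref{def:Hmenouno-bound} which bounds $\|\chi_{E(t+s)}-\chi_{E(t)}\|_{H^{-1}}$ by the square root of the accumulated dissipation, gives that $t\mapsto \chi_{E(t)}$ is Cauchy in $H^{-1}(\T^2)$ with exponentially small tails; interpolating the $H^{-1}$ bound against the uniform perimeter bound $M_0$ upgrades this to exponential convergence in $L^1$, i.e. $|E(t)\triangle E_\infty|\le Ce^{-t/C}$. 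Finally, identifying the limit $E_\infty$: for large $t$, $E(t)$ is a $C^{1,1/2}$-small normal graph over one of the model configurations by Proposition~\ref{prop:2DAle2}, and the $L^1$ (indeed Hausdorff) closeness forces $E_\infty$ to be exactly a union of equal disks, its complement such a union, or a union of parallel strips, with total area $m$; uniqueness of the $L^1$-limit point follows from the Cauchy property.

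The main obstacle, as the authors flag in the introduction, is the non-uniqueness of the limit configuration at a fixed perimeter in the strip case: a value $P\in\mathcal{P}_{m,M_0}$ can correspond to strips of several slopes and to several distributions of the mass among components, and the volume of an individual component need not be bounded below. So one cannot, as in~\cite{JulMorPonSpa}, pin down the configuration from the perimeter alone along the flow; instead one must argue that the flat flow, being close in $C^{1,1/2}$ to \emph{some} such configuration at each large time and varying continuously (in $H^{-1}$), cannot jump between distinct configurations, which requires care because the set of model configurations with a given perimeter is not discrete. A secondary technical point is making the heuristic differential inequality for $P(E(t))$ rigorous at the level of flat flows, including controlling the Lagrange multipliers (Lemma~\ref{lemma4.1JMPS}) so that the Alexandrov-type estimate is uniformly applicable along the whole flow.
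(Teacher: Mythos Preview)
Your plan is essentially the paper's own approach: energy identity \eqref{eq:energy-compa2}, quantitative Alexandrov (Corollary~\ref{coro:2Dale2}) as a \L ojasiewicz-type inequality, a discrete Gronwall loop giving exponential decay of the accumulated dissipation, $H^{-1}$-Cauchy upgraded to $L^1$ via interpolation against the uniform perimeter bound, and identification of $E_\infty$ through Proposition~\ref{prop:2DAle2}. You also correctly flag the strip-case obstacle.

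Two places where your outline is looser than the paper and would need tightening. First, you claim $t\mapsto P(E(t))$ is non-increasing for the flat flow and base the argument on $P_\infty:=\lim_t P(E(t))$; this is not available, since perimeter is only lower-semicontinuous under $L^1$-limits. The paper never uses $P(E(t))$: it works entirely with $f_n(t):=P(E^{(h_n)}(t))$, extracts a pointwise limit $f_\infty$, sets $F_\infty:=\lim_{t\to\infty}f_\infty(t)$, and applies Corollary~\ref{coro:2Dale2} to the \emph{discrete} sets $E^{(h_n)}_i$ (which are $C^{3,\alpha}$ and carry the potential $U^{(h_n)}_i$), passing to the limit only at the end. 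You partly acknowledge this, but your earlier statements about $P(E(t))$ and $u_{E(t)}$ should be rephrased at the discrete level from the start.

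Second, your dichotomy and its resolution are incomplete. The correct split is on $f_\infty$: either (Case~1) $f_\infty(t)>P$ for all $t$ with $P\le F_\infty<P'$ consecutive in $\mathcal P_{m,M}$, so for each $T$ one has $P\le f_n(t)\le P'-\delta_0$ on $[\bar t,T]$ and Corollary~\ref{coro:2Dale2} feeds the discrete iteration of \cite[Lemma~3.1]{JulMorPonSpa}; or (Case~2) $f_\infty(t)\equiv F_\infty=P$ for $t\ge\bar t$. In Case~2 there is no Gronwall step at all: monotonicity of $f_n$ forces \emph{uniform} convergence $f_n\to F_\infty$ on $[\bar t,T]$, so the right-hand side of \eqref{eq:thm322} vanishes as $n\to\infty$, and plugging this into \eqref{eq:thm312} with $\e=b_n^{1/4}$ yields directly $E(t)=E(s)$ for all $t,s>\bar t$. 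Your sentence ``either $P(E(t))=P_\infty$ exactly for large $t$'' gestures at this case but does not supply the argument.
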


\begin{proof}
Let $\{E(t)\}_{t\geq0}$ be a flat flow for the Mullins-Sekerka flow, and let $h_n \to 0$ and $\{E^{(h_n)}(t) \}_{t\geq0}$ be a discrete flow converging to $E(t)$ in $L^1$ as $n \to \infty$. 
We begin by proving that $E(t) \to E_{\infty}$ in $L^1,$ and then provide a characterization of the possible limiting configurations $E_\infty.$

From the interpolation inequality~\cite[Lemma 4.4]{JulMorPonSpa},  for every $\varepsilon >0$ and $t<s$, we have   
\[
\begin{split}
\|\chi_{E^{(h_n)}(s)} &- \chi_{E^{(h_n)}(t)}\|_{L^1(\T^2)} \\
&\leq C \e  \, \|\chi_{E^{(h_n)}(s)} - \chi_{E^{(h_n)}(t)}\|_{\text{BV}(\T^2)} + C \e^{-1} \|\chi_{E^{(h_n)}(s)} - \chi_{E^{(h_n)}(t)}\|_{H^{-1}(\T^2)}, 
\end{split}
\]
Notice that for all $t >0$, by testing the minimality of $E^{(h)}_k$ with $E^{(h)}_{k-1}$, it follows $\|\chi_{E^{(h_n)}(t)}\|_{\text{BV}(\T^2)}\leq \|\chi_{E(0)}\|_{\text{BV}(\T^2)}\le C$. Consequently, the above inequality becomes
\beq \label{eq:thm311}
\|\chi_{E^{(h_n)}(s)} - \chi_{E^{(h_n)}(t)}\|_{L^1(\T^2)} 
\leq C \e   + C \e^{-1} \|\chi_{E^{(h_n)}(s)} - \chi_{E^{(h_n)}(t)}\|_{H^{-1}(\T^2)}, 
\eeq
Using \eqref{def:Hmenouno-bound} and Jensen inequality, we find
\begin{equation*}
\begin{split}
&\|\chi_{E^{(h_n)}(s)} - \chi_{E^{(h_n)}(t)}\|_{H^{-1}(\T^2)} \le \sum_{k=\lfloor\frac{t}{h_n}\rfloor+1}^{\lfloor\frac{s}{h_n}\rfloor} \|\chi_{E^{(h_n)}_{k}} - \chi_{E^{(h_n)}_{k-1}}\|_{H^{-1}(\T^2)}  \\ 
 &\leq 
 \sum_{k=\lfloor\frac{t}{h_n}\rfloor+1}^{\lfloor\frac{s}{h_n}\rfloor}\left(h_n^2 \mathcal{D}(E^{(h_n)}_{k},E^{(h_n)}_{k-1}) \right)^{\frac{1}{2}}\leq  \frac{\sqrt{s-t}}{\sqrt{h_n}} \Big( \sum_{k=\lfloor\frac{t}{h_n}\rfloor+1}^{\lfloor\frac{s}{h_n}\rfloor} h_n^2 \, \mathcal{D}(E^{(h_n)}_{k},E^{(h_n)}_{k-1})\Big)^{\frac12}.
\end{split}
\end{equation*}
By combining the above inequality with \eqref{eq:thm311} we get
\beq \label{eq:thm312}
\|\chi_{E^{(h_n)}(s)} - \chi_{E^{(h_n)}(t)}\|_{L^1(\T^2)} 
\leq C \e   + C \e^{-1} \frac{\sqrt{s-t}}{\sqrt{h_n}} \Big( \sum_{k=\lfloor\frac{t}{h_n}\rfloor+1}^{\lfloor\frac{s}{h_n}\rfloor} h_n^2 \, \mathcal{D}(E^{(h_n)}_{k},E^{(h_n)}_{k-1})\Big)^{\frac12}. 
\eeq
Therefore, in view of \eqref{eq:thm312}, to establish the exponential convergence in $L^1$ of $\{\chi_{E(t)}\}_{t \ge 0}$ as $t \to \infty$, it is sufficient to prove the exponential decay of the last term on the right hand side of \eqref{eq:thm312}. To achieve this, we begin by summing \eqref{eq:energy-compa2}, which gives
\beq\label{eq:thm322}
\frac{h_n}{2}\sum_{k=\lfloor\frac{t}{h_n}\rfloor+1}^{\lfloor\frac{s}{h_n}\rfloor} \mathcal{D}(E^{(h_n)}_{k},E^{(h_n)}_{k-1}) \leq P(E_{\lfloor\frac{t}{h_n}\rfloor}^{(h_n)}) - P\big(E^{(h_n)}_{\lfloor\frac{s}{h_n}\rfloor}\big).
\eeq
We now define the sequence of functions $f_n:[0, \infty) \to \R$ as follows: 
\[
f_n(t) = P(E^{(h_n)}(t)).
\]
From \eqref{eq:energy-compa2}, we notice that each $f_n$ is monotone non-increasing. Therefore, up to a subsequence, (still denoted by $f_n$) $f_n$ converges pointwise as $n \to \infty$ to a non-increasing function $f_\infty:[0,\infty)\to \R$.
Set $F_\infty= \lim_{t\to \infty}f_\infty(t)$.  We divide the proof in two cases.

\textbf{Case 1}: There exist
two consecutive elements $P < P'$ of the increasing sequence $\mathcal{P}_{m,M}$ (recall Definition~\ref{definizione}) such that either $P<F_\infty<P'$, or $F_\infty=P$ and $f_\infty(t) >P$ for every $t\in [0,\infty)$.
In this case, there exists $\bar t \geq 1$ with the following property: for every $T>\bar t$ there exists $\bar n\in \N\setminus\{0\}$ such that
\begin{gather}\label{eq:thm3-1}
P\leq f_n(t) < P'\qquad \text{and}\qquad P' -f_n(t) \geq \frac{P'-F_\infty}{2}=:\delta_0
\end{gather}
for every $n\geq \bar n$ and   $t \in [ \bar t, T]$. In the following, we denote by $C$ a positive constant depending on $m$, $M$, $\delta_0$ and the flat flow itself (but not on $h$), that may change from line to line and possibly  within the same line.
From \eqref{eq:thm322} and \eqref{eq:thm3-1}, we deduce  
for every $i \in \big\{\lfloor\frac{\bar t}{h_n}\rfloor, \ldots, \lfloor\frac{T}{h_n}\rfloor\big\}$ that 
\beq\label{eq:thm3-2}
\frac{h_n}{2}\sum_{k=i+1}^{\lfloor\frac{T}{h_n}\rfloor} \mathcal{D}(E^{(h_n)}_{k},E^{(h_n)}_{k-1}) \leq P(E_i^{(h_n)}) - P\big(E^{(h_n)}_{\lfloor\frac{T}{h_n}\rfloor}\big) \leq P(E_i^{(h_n)})  -P.
\eeq
Since, by \eqref{eq:thm3-1}, we have $P \le P(E_i^{(h_n)}) \le P'-\delta_0,$ Corollary~\ref{coro:2Dale2} yields
$$P(E_i^{(h_n)}) - P \leq  C\|\nabla U_i^{(h_n)}\|_{L^2(\T^2)}^2.$$ Consequently, by \eqref{eq:thm3-2}  it holds
\[
\frac{h_n}{2}\sum_{k=i+1}^{\lfloor\frac{T}{h_n}\rfloor}  \mathcal{D}(E^{(h_n)}_{k},E^{(h_n)}_{k-1}) \leq P(E_i^{(h_n)}) - P \leq  C\|\nabla U_i^{(h_n)}\|_{L^2(\T^2)}^2= C \mathcal{D}(E^{(h_n)}_{i},E^{(h_n)}_{i-1}).
\]
Therefore we obtain 
\[
\sum_{k=i+1}^{\lfloor\frac{T}{h_n}\rfloor} \mathcal{D}(E^{(h_n)}_{k},E^{(h_n)}_{k-1}) \leq \frac{C}{h_n}\mathcal{D}(E^{(h_n)}_{i},E^{(h_n)}_{i-1}) .
\]
By~\cite[Lemma 3.1]{JulMorPonSpa}, the inequality above implies for every $t\in [\bar t, T]$ that
\beq\label{decay D}
\sum_{k=\lfloor\frac{t}{h_n}\rfloor+1}^{\lfloor\frac{T}{h_n}\rfloor}
h_n\, \mathcal{D}(E^{(h_n)}_{k},E^{(h_n)}_{k-1})
\leq  2 \left(1-\frac{h_n}{C}\right)^{\lfloor\frac{t}{h_n}\rfloor - \frac{\bar t}{h_n}}\leq  C e^{-\frac{t}{C}}
\eeq
for $h_n\leq h_0(T)$. Combining  \eqref{eq:thm312} and \eqref{decay D}, we find that for $\bar t \leq t <s \leq T$ with $s \leq t+1$ the following holds
\begin{equation*}
\begin{split}
\|\chi_{E^{(h_n)}(s)} &- \chi_{E^{(h_n)}(t)}\|_{L^1(\T^2)} \leq C \varepsilon + C \e^{-1} e^{-\frac{t}{C}},
\end{split}
\end{equation*}
when $h_n\leq h_0(T)$. Choose $\e =  e^{-\frac{t}{2C}}$ above to get
\begin{equation}\label{eq:thm3-3}
\|\chi_{E^{(h_n)}(s)} - \chi_{E^{(h_n)}(t)} \|_{L^1(\T^2)}  \leq C e^{-\frac{t}{C}}.
\end{equation}
Finally, we let $h_n \to 0$ to conclude that there exists a positive constant $C>1$ such that the flat flow satisfies 
\beq\label{end half case 1}
|E(s) \Delta E(t)|   \leq C \,  e^{-\frac{t}{C}}.
\eeq
Hence, we deduce that $E(t)$ converges to a set of finite perimeter $E_\infty$ in $L^1$ exponentially fast. 

We now show that the  limiting set $E_\infty$ is the union of disjoint open disks with the same radius or a union of strips. 
Fix $t\geq \bar t$ and $\alpha>0$ small, to be chosen later. Recall that $U^{h_n}_i$ is a function with zero average. By the trace inequality \eqref{4.12JMPS} and by Poincar\'e inequality, we obtain  
\beq\label{expo21}
\begin{split}
\int_{[t, t+e^{-\alpha t}]} \Big(\int_{\bd E^{(h_n)}(s)} |U^{(h_n)}_s|^2 d\H^1\Big) ds &\leq
h_n\sum_{i=\lfloor \frac{t}{h_n}\rfloor}^{\lfloor \frac{t+e^{-\alpha t}}{h_n}\rfloor}\int_{\T^2} |\nabla U_{i}^{(h_n)}|^2 \, d \H^1 \\
&\le C
\sum_{i=\lfloor \frac{t}{h_n}\rfloor}^{\lfloor \frac{t+e^{-\alpha t}}{h_n}\rfloor} h_n \mathcal{D}(E^{(h_n)}_{i},E^{(h_n)}_{i-1}) 
\le C   e^{-\frac{t}{C}}\,,
\end{split}
\eeq
for $n$ sufficiently large, where the last two inequalities follow, respectively, from \eqref{def:distance2} and \eqref{decay D}.
Then, by \eqref{expo21}, the mean value theorem and the Euler-Lagrange equation \eqref{eq:Euler-Lagr2}, we deduce that there exists $s_n \in [t, t+e^{-\alpha t}]$ such that 
\beq\label{expo3}
\|\kappa_{E^{(h_n)}(s_n)}-\okappa_{E^{(h_n)}(s_n)} \|_{L^2(\bd E^{(h_n)}(s_n))}^2\leq \int_{\bd E^{(h_n)}(s_n)} |U^{(h_n)}_{s_n}|^2 d\H^1 \leq Ce^{-\big(\frac{1}{C}-\alpha\big)t}\le Ce^{-\frac tC}
\eeq
whenever $\alpha>0$ is small (depending on $m,M,\delta_0$ and possibly on the flat flow).
Thus, for  $t$ sufficiently large Theorems~\ref{teo alex} and \ref{prop alex} imply that $E^{(h_n)}(s_n)$ is diffeomorphic to $F^{(h_n)}(s_n)$, where 
\begin{itemize}
\item[(a1)] $F^{(h_n)}(s_n)$ is a union of disjoint disks at positive distance from each other  with equal radii and having total perimeter $P$ and area $m$; 
\item[(a2)] $F^{(h_n)}(s_n)$ is the complement of a union of disjoint disks at positive distance from each other  with equal radii and having total perimeter $P$ and area $1-m$;  
\item[(b)] or $F^{(h_n)}(s_n)$ is a union of disjoint strips (whose area may vary in $n$) with perimeter $ P$ and area $m,$
\end{itemize} 
and furthermore it holds 
\begin{equation}\label{eq:P_est_discr}
    P(E^{(h_n)}(s_n))-P\le Ce^{-\frac{t}{C}}.
\end{equation}
We note that, in cases (a1), (a2), the limit configuration is uniquely determined (up to translations), and in case (b), it is uniquely determined up to translations and a change in the mass of the single strips.
Therefore, up to a subsequence, we can assume that all sets $E^{(h_n)}(s_n)$ are parametrized over the same configuration. 
We also observe that, in both the cases, the $C^{1,\frac 12}$-norm of the parametrization decays exponentially and, in particular, we have
\begin{equation}\label{eq:expo-discr}
    |E^{(h_n)}(s_n)\triangle F^{(h_n)}(s_n)|\le C e^{-\frac tC},
\end{equation}
for $ n \ge n_0(t)\in\N$, and where the constant $C>0$ does not depend on $t$ and $n$.
Note that for $m\ge n\in\N$ it also holds 
\begin{equation}\label{eq:expo-discre2}
    |F^{(h_m)}(s_m)\triangle F^{(h_n)}(s_n)|\le  |E^{(h_m)}(s_m)\triangle E^{(h_n)}(s_n)| + 2C e^{-\frac tC} 
\end{equation}
for $m\ge n \ge n_0(t)\in\N$. Now, passing to the limit as $h_n\to 0$ (up to a further not relabelled subsequence, if needed), there exists $s_t \in [t, t+ e^{-\alpha t}]$ such that $s_n\to s_t$,  $E^{(h_n)}(s_n)\to E(s_t)$ and $F^{(h_n)}(s_n)\to F(s_t)$ in $L^1$. In particular,  the sets $F(s_t)$ (or their complement) are either 
\begin{itemize}
\item[(a')]  union of disjoint disks with equal radii, having total perimeter $P$; 
\item[(b')] or union of disjoint strips (whose area may vary in $t$) with total perimeter bounded by
$ P$.
\end{itemize}

By taking the limit as $h_n \to 0$ in \eqref{eq:expo-discr} we conclude 
\begin{align}\label{eq:mtdec}
  |E(s_t)\triangle F(s_t)| \le C e^{-\frac tC},  
\end{align} 
for some positive constant $C$ independent of $t$.
Hence also the sets $F(s_t)$ converge exponentially fast as $t \to +\infty$ towards $E_\infty$, and thus the sets $F(s_t)$ (or their complement), for every  sufficiently large $t$, must all be of configuration (a') or (b').
This implies that $E_\infty$ (or its complement) is also as in (a') or (b'), which completes the proof of Case 1.

\textbf{Case 2}: There exist $P \in \mathcal{P}_{m,M}$ and $\bar t> 0$ such that $F_\infty=P=f_\infty(t)$ for every $t\geq \bar t$.

Since the functions $f_n$ are monotone, we can conclude that  for every $T>\bar t$ they converge uniformly to $f_\infty\equiv F_\infty$ in $[\bar t, T]$.

From \eqref{eq:thm322}, for every $t\in [\bar t + h_n, T]$, we have
\begin{equation}\label{eq:thm3c2}
\sum_{k=\lfloor\frac{t}{h_n}\rfloor+1}^{\lfloor\frac{T}{h_n}\rfloor}
\frac{h_n}{2}\mathcal{D}(E^{(h_n)}_{k},E^{(h_n)}_{k-1})
\le f_n (t) - f_n (T)=:b_n
\to F_\infty - F_\infty=0 \ \ \ \textup{as } \  h_n\to 0,
\end{equation}
where the convergence is uniform in $t$. 
Combining the above inequality and \eqref{eq:thm312} with the choice $\varepsilon= b_n^{1/4}$, we find 
\begin{align*}
|E^{(h_n)}(t)\Delta E^{(h_n)}(s)|  \leq C b_n^{1/4} +C b_n^{1/4} \to 0 \ \ \ \text{as } \  h_n\to 0,
\end{align*}
for every $\bar t +h_n \leq t < s \leq T$.
Thus, we get $E(t) = E(s)$ for every $\bar t<t<s<T$. Since $T >\bar t$ is arbitrary, we obtain $E(t) =E(s)$ for all $t,s > \bar t.$

Now, we show that, for every $t >\bar t$, the set $E(t)$,  and hence the limiting set $E_\infty$, is the union of disjoint open disks with the same radius or the union of disjoint strip. Reasoning as in Case 1 (cf. \eqref{expo21}-\eqref{expo3}) and using \eqref{eq:thm3c2} we find 
 $s_n \in [t,t+1]$ such that 
\[
\|\kappa_{E^{(h_n)}(s_n)}-\okappa_{E^{(h_n)}(s_n)} \|_{L^2(\bd E^{(h_n)}(s_n))}^2 =o(1).
\]
By Proposition~\ref{prop alex} the sets $E^{(h_n)}(s_n)$ are either small normal deformations of disjoint union of open disks or of parallel strips  with small $C^{1,\frac12}$ norms. 
Arguing as in the previous case, we find that for some $s \in [t,t+1]$, $E(s)$  is either a disjoint union of open disks or of parallel strips. This completes the proof of Case 2. 
\end{proof}


We now show the convergence of the perimeters along the flow under additional assumptions.
We remind the reader that, unlike in the case of balls, where the radii are the same, strips may have different widths. 
This gives rise to the possibility of a strip converging to a line or two strips merging into one. 
The additional assumptions we require ensure that this cannot happen.
\begin{cor}\label{cor:convper}
Let $\{E(t)\}_{t \ge 0}$ be as in Theorem~\ref{main theorem 2}, and let $\{E^{(h_n)}(t)\}_{t \ge 0}$ be a discrete flow converging to $E(t)$ in $L^1$ as $n \to \infty$. Assume also that $P(E_0)<4$ if $E_\infty$ is as in $(\textnormal{iii})$.
Then, it holds
\begin{equation}\label{eq:exp_dec}
    |P(E(t)) - P(E_\infty)| \leq C e^{-\frac{t}{C}},
\end{equation}
for some positive constant $C$.
\end{cor}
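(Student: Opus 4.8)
The plan is to upgrade the exponential $L^1$-convergence $|E(t)\triangle E_\infty|\le Ce^{-t/C}$ of Theorem~\ref{main theorem 2} to an exponential decay of the perimeters, by re-examining the special times produced in its proof. Recall from there (cf.\ \eqref{expo21}--\eqref{expo3}) that for every large $t$ one finds, along the fixed discrete flow, a time $s_n=s_n(t)\in[t,t+1]$ with $\|\kappa_{E^{(h_n)}(s_n)}-\okappa_{E^{(h_n)}(s_n)}\|^2_{L^2(\bd E^{(h_n)}(s_n))}\le Ce^{-t/C}$, so that Proposition~\ref{prop alex} applies to $E^{(h_n)}(s_n)$: it is a small normal graph over a reference configuration $F^{(h_n)}(s_n)$ -- a disjoint union of equal disks (possibly complemented) or of parallel strips -- with $P(E^{(h_n)}(s_n))\ge P(F^{(h_n)}(s_n))$ and $|P(E^{(h_n)}(s_n))-P(F^{(h_n)}(s_n))|\le Ce^{-t/C}$. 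Moreover $E^{(h_n)}(s_n)\to E(s_t)$ and $F^{(h_n)}(s_n)\to F(s_t)$ in $L^1$ as $n\to\infty$, with $F(s_t)\to E_\infty$ as $t\to\infty$. The crux is to show that $P(F^{(h_n)}(s_n))=P(E_\infty)$ once $t$ and $n$ are large.

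This identification is the main obstacle, and it is precisely here that the assumption $P(E_0)<4$ (in case (iii)) is needed. In cases (i)--(ii) of Theorem~\ref{main theorem 2}, $F^{(h_n)}(s_n)$ is a disjoint union of finitely many equal disks of fixed total area, and since a union of $k$ equal disks of given total area cannot be $L^1$-close to a union of $k'\ne k$ equal disks of the same total area, the number of disks must agree with that of $E_\infty$ as soon as $|F^{(h_n)}(s_n)\triangle E_\infty|$ is small; hence $P(F^{(h_n)}(s_n))=P(E_\infty)$ for $t$ large. In case (iii), $F^{(h_n)}(s_n)$ is a union of $\ell_n$ parallel strips, and a priori $\ell_n$ could exceed the number of strips of $E_\infty$ -- one or more strips collapsing onto a line in the limit -- which is exactly the obstruction to convergence of the perimeters. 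The hypothesis $P(E_0)<4$ rules this out: each bounding curve of a strip is a non-bounding closed curve in $\T^2$, hence of length at least $1$ by Lemma~\ref{GB}, so a union of $\ell$ parallel strips has perimeter at least $2\ell$; combined with $P(F^{(h_n)}(s_n))\le P(E^{(h_n)}(s_n))\le P(E_0^{(h_n)})=P(E_0)<4$ (using the monotonicity of the discrete perimeter, \eqref{eq:energy-compa2}), this forces $\ell_n=1$. Therefore $E_\infty$ is a single strip, the configurations $F^{(h_n)}(s_n)$ belong to a finite family (single strips of finitely many admissible slopes, up to translation), and the $L^1$-convergences $F^{(h_n)}(s_n)\to F(s_t)\to E_\infty$ force $P(F^{(h_n)}(s_n))=P(E_\infty)$ for $t$ large and $n\ge N(t)$.

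Granting $P(F^{(h_n)}(s_n))=P(E_\infty)$, the conclusion follows quickly. We get $P(E^{(h_n)}(s_n))\le P(E_\infty)+Ce^{-t/C}$; by the monotonicity $P(E^{(h_n)}_{k+1})\le P(E^{(h_n)}_k)$ and $s_n\le t+1$, this gives $P(E^{(h_n)}(\tau))\le P(E_\infty)+Ce^{-t/C}$ for every $\tau\ge t+1$ and $n\ge N(t)$. Letting $n\to\infty$ at a fixed a.e.\ $\tau$ (where $E^{(h_n)}(\tau)\to E(\tau)$) and using lower semicontinuity of the perimeter yields $P(E(\tau))\le P(E_\infty)+Ce^{-t/C}$, and choosing $t=\tau-1$ gives the upper bound $P(E(\tau))-P(E_\infty)\le Ce^{-\tau/C}$. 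The reverse inequality is immediate: $t\mapsto P(E(t))$ is non-increasing along the flat flow (it coincides a.e.\ with the monotone limit of the discrete perimeters) and $E(t)\to E_\infty$ in $L^1$, so $P(E(t))\ge\lim_{\tau\to\infty}P(E(\tau))\ge P(E_\infty)$ by lower semicontinuity. Finally, in the alternative (Case~2 in the proof of Theorem~\ref{main theorem 2}) one has $E(t)=E_\infty$ for all large $t$, so \eqref{eq:exp_dec} is trivially satisfied there; absorbing the contribution of small $t$ into $C$ completes the proof.
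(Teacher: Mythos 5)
Your identification of the limit configuration and your upper bound are essentially the paper's argument: in case (iii) the bound $P(E_0)<4$ forces every admissible union of parallel strips to consist of a single strip (each bounding line is non-contractible, hence of length at least $1$, wait, at least the length of a closed geodesic, so each strip has perimeter at least $2$), whence $P(F^{(h_n)}(s_n))=P(E_\infty)$ for large $t$, and then \eqref{eq:P_est_discr}, the monotonicity of the discrete perimeters, and lower semicontinuity as $h_n\to 0$ give $P(E(\tau))\le P(E_\infty)+Ce^{-\tau/C}$. Up to here the proposal is sound and matches the paper.

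The lower bound, however, contains a genuine gap. You assert that $t\mapsto P(E(t))$ is non-increasing because it ``coincides a.e.\ with the monotone limit of the discrete perimeters.'' This is not available: lower semicontinuity of the perimeter under $L^1$-convergence only gives $P(E(t))\le \liminf_n P(E^{(h_n)}(t))=f_\infty(t)$, and a strict drop of perimeter in the limit $h_n\to 0$ cannot be excluded a priori --- indeed, the possible failure of $P(E(t))=f_\infty(t)$ is exactly the phenomenon this corollary is meant to rule out (the introduction states that without extra hypotheses the authors ``fall short of proving the convergence of the perimeters along the flow''), so assuming it is circular. Without monotonicity of $P(E(t))$ your chain $P(E(t))\ge\lim_\tau P(E(\tau))\ge P(E_\infty)$ collapses, and with it the entire reverse inequality. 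The paper closes this gap with Lemma~\ref{lem:percon}, a quantitative reverse lower-semicontinuity estimate $P(F)\le P(E)+C|E\triangle F|$ valid when $F$ is a union of strips (or disks) whose boundary components are separated; applying it with $F=F(s_t)$ (a single strip, so the separation hypothesis is automatic) and $E=E(t+e^{-\alpha t})$, together with the exponential $L^1$ decay $|F(s_t)\triangle E(t+e^{-\alpha t})|\le Ce^{-t/C}$, yields $P(E_\infty)=P(F(s_t))\le P(E(t+e^{-\alpha t}))+Ce^{-t/C}$. You need this lemma (or an equivalent calibration/vector-field argument) to obtain the lower bound; note also that the hypothesis $P(E_0)<4$ is doing double duty here, since for a general union of strips the boundary components need not be uniformly separated and Lemma~\ref{lem:percon} would not apply with a controlled constant.
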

Before proceeding to the proof of the corollary, we prove an analogue of~\cite[Lemma 3.2]{JulMorPonSpa} that is required in our setting. 
\begin{lemma}\label{lem:percon}
    Let $F$ be a union of $d$ disjoint disks of  radius $r$ or a union of parallel strips, with  boundary components having distance at least  $\delta > 0$ one from another.  Then there exists a constant $C$, depending on $d$, $r$ and $\delta$ such that for every set of finite
perimeter $E \subset \mathbb T^2,$ we have 
$$P(F) \le P(E) + C| E \Delta F|.$$
\end{lemma}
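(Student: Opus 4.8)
The plan is to compare $P(F)$ with $P(E)$ through a \emph{calibration-type} vector field adapted to $F$, in the spirit of~\cite[Lemma 3.2]{JulMorPonSpa}. The key geometric fact is that $\bd F$ is a finite union of circles of radius $r$, or of straight lines, whose connected components are mutually at distance $\ge \delta$; hence the signed distance function $d_F$ to $\bd F$ (say negative inside $F$) is of class $C^2$ on the tubular neighbourhood $N_\rho:=\{x\in\T^2:|d_F(x)|<\rho\}$ as soon as $\rho:=\tfrac14\min\{r,\delta\}$ (the radius $r$ controls the inner focal distance in the disk case, and $\delta$ keeps the tubular neighbourhoods of distinct components disjoint). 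On $N_\rho$ one has $|\nabla d_F|\equiv1$, and $|\Delta d_F(x)|$ is the curvature of the level set $\{d_F=d_F(x)\}$, so $|\Delta d_F|\le 2/r$ in the disk case and $\Delta d_F\equiv0$ in the strip case.

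First I would fix a smooth cutoff $\eta:\R\to[0,1]$ with $\eta(0)=1$, $\operatorname{supp}\eta\subset(-\rho,\rho)$ and $|\eta'|\le C/\rho$, and set $X:=\eta(d_F)\,\nabla d_F$ on $N_\rho$ and $X:=0$ elsewhere. Then $X\in C^1(\T^2;\R^2)$, $|X|\le1$ everywhere, $X=\nu_F$ on $\bd F$, and on $N_\rho$
\[
\div X=\eta'(d_F)\,|\nabla d_F|^2+\eta(d_F)\,\Delta d_F ,
\]
so that $C_0:=\|\div X\|_{L^\infty(\T^2)}\le C/\rho+2/r$ depends only on $r$ and $\delta$ (hence on $d,r,\delta$ as in the statement). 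The Gauss--Green formula applied to the smooth set $F$ gives $P(F)=\int_{\bd F}X\cdot\nu_F\,\ud\H^1=\int_F\div X\,\ud x$, while for the set of finite perimeter $E$ the same formula together with $|X|\le1$ gives $\int_E\div X\,\ud x=\int_{\partial^* E}X\cdot\nu_E\,\ud\H^1\le P(E)$. Subtracting,
\[
P(F)=\int_E\div X\,\ud x+\int_{F\setminus E}\div X\,\ud x-\int_{E\setminus F}\div X\,\ud x\le P(E)+C_0\,|E\triangle F| ,
\]
which is exactly the claimed inequality with $C=C_0$.

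The only step that requires genuine care is the $C^2$-regularity of $d_F$, and the bound on $\Delta d_F$, on a tubular neighbourhood of width comparable to $\min\{r,\delta\}$; this is where the hypotheses on $r$ and $\delta$ (and implicitly on $d$, through the number of components) are used, and it is where one invokes the explicit description of $\bd F$ as circles and lines together with the mutual-distance assumption. Once $X$ is constructed, the remainder is the routine calibration comparison and presents no further difficulty.
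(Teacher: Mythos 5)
Your proof is correct and follows essentially the same calibration argument as the paper: a cutoff of the gradient of the signed distance to $F$, combined with the divergence theorem and a uniform bound on $\div X$ in the tubular neighbourhood. The only difference is that you treat the disk case directly (via the bound $|\Delta d_F|\le 2/r$), whereas the paper delegates it to~\cite[Lemma 3.2]{JulMorPonSpa} and writes out only the strip case, where $\Delta\,\sd_F\equiv 0$.
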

\begin{proof}
Since~\cite[Lemma 3.2]{JulMorPonSpa} applies to the case where $F$ is a union of $d$ disjoint disks with radius $r$, we will only prove the result for the case where $F$ is a union of parallel strips. 
Let $\text{sd}_F$ be the signed distance function\footnote{We recall that   the signed distance function to $E$ is defined  as $\sd_E(x) := \dist(x, E) -  \dist(x, \R^2 \setminus E) $ for  $ x \in \R^2.$} from $F$. Let $\zeta$ be a  smooth function such that 
\[
\zeta(x) = \begin{cases}
    1 \quad \text{if } |\text{sd}_F(x)| \leq \delta/8,\\
    0 \quad \text{if } |\text{sd}_F(x)| \geq \delta/4.
\end{cases}
\]
We now define the vector field  $X = \zeta \nabla \text{sd}_F,$  which  satisfies $|\div X| \leq \frac{C}{\delta}.$  Note that 
\[
\Delta \text{sd}_F(x) = 0, \quad  \text{ in } \{|\text{sd}_F| \leq \delta/8\}.
\]  
Thus, by the divergence theorem, we conclude the estimate 
\[
P(F)-P(E) \le \int_{ E \Delta F} |\operatorname{div} X| \le \frac{C}{\delta}|E \Delta F|.
\]  
\end{proof}
\begin{proof}[Proof of Corollary \ref{cor:convper}]
    We consider only the case $(\textnormal{iii})$, since the others have already been proved in \cite{JulMorPonSpa}.
    We note that, under the assumption $P(E_0)<4$, the limit set $E_\infty$ must be a single strip. Hence, in the notation of the proof of Theorem~\ref{main theorem 2}, also the sets $F^{(h_n)}(s_n)$ must be single strips, and also their limit $F(s_t)$ as $n \to \infty$.
    In particular $P(F(s_t))=P(F^{(h_n)}(s_n))=P(E_\infty).$
    
    By Lemma \ref{lem:percon}  with $F=F(s_t)$ and $E=E(t+e^{-\alpha t})$ we get
    \begin{multline*}
    P(F(s_t))\le P(E(t+e^{-\alpha t}))+C|F(s_t)\triangle  E(t +e^{-\alpha t})|\\\le  \liminf_{n\to \infty} P(E^{(h_n)}(t +e^{-\alpha t} ))+C|F(s_t)\triangle E(t+e^{-\alpha t})|.        
    \end{multline*}
    By \eqref{eq:P_est_discr} and monotonicity of the perimeters we estimate
    \[
    P(E^{(h_n)}(t+e^{-\alpha t}))\le P(E^{(h_n)}(s_n)) \le P(F^{(h_n)}(s_n)) +Ce^{-\frac{t}{C}}= P(E_\infty)+Ce^{-\frac{t}{C}}.
    \]
    By combining the above inequalities
    and by taking the limit as $n \to \infty$ we conclude
    \[
    \begin{split}
        P(E_\infty)= P(F(s_t)) \le P(E(t+e^{-\alpha t})) +Ce^{-\frac{t}{C}}
        \le P(E_\infty) +Ce^{-\frac{t}{C}},
    \end{split}
    \]
    which implies the claim.
\end{proof}

\begin{rmrk}\label{rmk:dist_strip}
    Let $E_\infty$ be as in $(\textnormal{iii})$ and
    denote by $\Gamma_i$ the connected components of $\partial E^{(h_n)}(t)$. We note that, if there exists $\delta >0$ such that 
    \begin{equation*}
    \textnormal{dist}_{\Gamma_i} (\partial E^{(h_n)}(t) \setminus \Gamma_i) \ge \delta,
    \end{equation*}
    then we have same conclusion of Corollary \ref{cor:convper}. As already mentioned, this condition is in general nontrivial to prove, as strips may have arbitrary small mass.
\end{rmrk} 

\section{Asymptotic Behavior of the Area-Preserving Curvature Flow}

Let us recall the framework for the construction of the \textit{flat flows} of the area-preserving curvature flow, following~\cite{MSS} and with the insights provided by~\cite{Jul23}. 
Fix an area $m>0$ and a time step $h>0$.  Given a set $E\subseteq \T^2$ with $|E|=m$, we consider the minimization problem 
\beq
\label{def:min-prob}
\min  \Big{\{} P(F) + \frac{1}{h}\int_F  \textnormal{sd}_E \ud x \ :\   |F| = m  \Big{\}}.
\eeq
Note that minimizers exist, but may not be unique.  We define the dissipation of a set $F$ with respect to a set $E$ as
 \beq
\label{def:distance}
\mathcal{D}(F,E) := \int_{F \Delta E} \dist(x,\bd E)\ud x .
\eeq
Observe that the minimization problem \eqref{def:min-prob} can be rewritten as 
\[
\min  \Big{\{} P(F) + \frac{1}{h}\mathcal{D}(F,E)\ :\ 
  |F| = m \Big{\}}.
\]
\textit{Flat flows}, in the spirit~\cite{ATW,LS}, are then defined as follows.
Let $E(0) \subseteq \T^2$ be a  set of finite perimeter, we iteratively set 
\[   E_{k+1}^{(h)}\in \text{argmin}\left\lbrace   P(F) + \frac 1h \mathcal D(F,E_{k}^{(h)} )\ :\ |F|=m\right\rbrace. \]
We define \textit{discrete flow}  $\{E^{(h)}(t)\}_{t \geq 0}$ by setting
\[
E^{(h)}(t) = E_k^{(h)} \qquad \text{for }\, t \in [kh, (k+1) h).
\]
Finally, a \textit{flat flow} solution of the area-preserving curvature flow starting from $E(0)$ is defined as  any  family of sets $\{E(t)\}_{t \geq 0}$ which is a cluster point of $\{E^{(h)}(t)\}_{t \geq 0}$, i.e., 
\[
E^{(h_n)}(t) \to E(t) \quad \text{as } \, h_n \to 0 \quad \text{in } \, L^1 \quad \text{for a.e. }\, t >0 .
\]
Testing the minimality of $E_{k+1}^{(h)}$ with $E_{k}^{(h)}$, we immediately obtain
 \beq
\label{eg:energy-compa}
P(E_{k+1}^{(h)}) + \frac{1}{h}\mathcal{D}(E_{k+1}^{(h)},E_k^{(h)})   \leq P(E_{k}^{(h)}), 
\eeq
in particular, the perimeter is non-increasing along the sequence $E_{k}^{(h)}$. Moreover (see~\cite{Jul23, MSS}), the set $E_{k+1}^{(h)}$ is $C^{2,\alpha}$-regular and satisfies the Euler-Lagrange equation
\beq
\label{eg:Euler-Lag}
\frac{\textnormal{sd}_{E_k^{(h)}}}{h} = - \kappa_{E_{k+1}^{(h)}} + \lambda_{k+1}^{(h)} \qquad \text{on }\, \bd E_{k+1}^{(h)},
\eeq
in the classical sense, where $\lambda_{k+1}^{(h)}$ is the Lagrange multiplier associated to the volume constraint. 
By~\cite[Theorem 1]{Jul23} there exists a \textit{flat flow} starting from $E(0)$ such that $|E(t)| = m$ for every $t \geq 0$ and $P(E(t)) \leq P(E(0))$.

We can now state our main result on the asymptotic behavior of \textit{flat flows} for area-preserving curvature flow.

\begin{thrm}\label{main theorem 1}
    Let $0<m<1$ and let $\{E(t)\}_{t\ge 0}$ be an area-preserving flat flow starting from a set of finite perimeter $E(0)\subseteq \T^2$ with $|E(0)|=m$. Then, the family $\{E(t)\}_{t\ge 0}$ has a unique $L^1$-limit point $E_\infty$, which takes one of the following forms: 
    \begin{itemize}
        \item[(i)] $E_\infty$ is the union of $d\in\N$ disjoint open disks $D_r(x_1),\dots,D_r(x_d)$ with the same area and with $\pi r^2d=m$;
        \item[(ii)] $\T^2\setminus E_\infty$ is the union of $d\in\N$  disjoint open disks $D_r(x_1),\dots,D_r(x_d)$ with the same area and with $\pi r^2d=1-m$;
        \item[(iii)] $E_\infty$ is the union of $\ell\in\N$ disjoint open strips $S_1,\dots, S_\ell$, possibly having different areas, and with $|\bigcup_{i=1}^\ell S_i|=m$.
    \end{itemize}
    In all cases the rate of convergence is exponential, that is
        \begin{equation*}
        | E(t)\triangle E_\infty| \le C e^{-\frac tC},
    \end{equation*}
    for some positive constant $C$ independent of $t$.
\end{thrm}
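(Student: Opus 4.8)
The plan is to run, step by step, the argument proving Theorem~\ref{main theorem 2}, with the $H^{-1}$-dissipation of the Mullins--Sekerka scheme replaced by the distance dissipation $\mathcal{D}$ of \eqref{def:distance} and the potential $u_E$ replaced by the term $\sd_{E_k^{(h)}}/h$ appearing in the Euler--Lagrange equation \eqref{eg:Euler-Lag}. Fix $h_n\to0$ and a discrete flow $E^{(h_n)}(t)\to E(t)$ in $L^1$, set $f_n(t):=P(E^{(h_n)}(t))$ (non-increasing by \eqref{eg:energy-compa}), and pass to a pointwise limit $f_\infty$ with $F_\infty:=\lim_{t\to\infty}f_\infty(t)$. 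Exactly as in Theorem~\ref{main theorem 2} one splits into \textbf{Case 1} ($P<F_\infty<P'$ for two consecutive elements $P<P'$ of $\mathcal{P}_{m,M}$, or $F_\infty=P$ with $f_\infty(t)>P$ for all $t$) and \textbf{Case 2} ($F_\infty=P=f_\infty(t)$ for all $t\ge\bar t$); once the three flow-specific estimates below are in place, both cases go through essentially verbatim.

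The estimates are: \emph{(a)} a uniform-in-$h$ time-continuity bound
\[
|E^{(h_n)}(s)\triangle E^{(h_n)}(t)|\;\le\; C\sqrt{s-t}\Big(\tfrac1{h_n}\!\!\sum_{k=\lfloor t/h_n\rfloor+1}^{\lfloor s/h_n\rfloor}\!\!\mathcal{D}(E_k^{(h_n)},E_{k-1}^{(h_n)})\Big)^{1/2}\le C\sqrt{s-t}\,\big(f_n(t)-f_n(s)\big)^{1/2},
\]
obtained by chaining the elementary estimate $|E_k^{(h)}\triangle E_{k-1}^{(h)}|^2\le C(M)\,\mathcal{D}(E_k^{(h)},E_{k-1}^{(h)})$ — split $E_k^{(h)}\triangle E_{k-1}^{(h)}$ into its intersection with the $\lambda$-neighbourhood of $\partial E_{k-1}^{(h)}$ (of measure $\le C\lambda\,P(E_{k-1}^{(h)})$) and the rest (of measure $\le \mathcal{D}(E_k^{(h)},E_{k-1}^{(h)})/\lambda$), and optimise in $\lambda$ — with Cauchy--Schwarz and the summed form of \eqref{eg:energy-compa}; \emph{(b)} a curvature bound on each step,
\[
\|\kappa_{E_k^{(h)}}-\okappa_{E_k^{(h)}}\|_{L^2(\partial E_k^{(h)})}^2\;\le\;\frac1{h^2}\int_{\partial E_k^{(h)}}\sd_{E_{k-1}^{(h)}}^2\,\ud\H^1\;\le\;\frac{C}{h^2}\,\mathcal{D}(E_k^{(h)},E_{k-1}^{(h)}),
\]
where the first inequality is \eqref{eg:Euler-Lag} and the second is a trace-type estimate $\int_{\partial E_k^{(h)}}\sd_{E_{k-1}^{(h)}}^2\,\ud\H^1\le C\,\mathcal{D}(E_k^{(h)},E_{k-1}^{(h)})$ with $C=C(m,M)$, resting on uniform density and curvature bounds for the discrete sets $E_k^{(h)}$ (these follow from the regularity theory of~\cite{Jul23,MSS} together with a uniform bound on the Lagrange multiplier $\lambda_k^{(h)}$, proved along the lines of Lemma~\ref{lemma4.1JMPS}); and \emph{(c)} a quantitative Alexandrov estimate along the scheme: combining \emph{(b)} with Theorem~\ref{teo alex}, in the Case-1 regime $P\le P(E_i^{(h_n)})\le P'-\delta_0$ one gets
\[
P(E_i^{(h_n)})-P\;\le\; \frac{C}{h_n^2}\,\mathcal{D}(E_i^{(h_n)},E_{i-1}^{(h_n)}),
\]
the precise analogue of the step using Corollary~\ref{coro:2Dale2} in the proof of Theorem~\ref{main theorem 2}.

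With \emph{(a)}--\emph{(c)} in place, Case~1 runs as follows. Summing \eqref{eg:energy-compa} over $k>i$ and inserting \emph{(c)} gives $\sum_{k>i}\mathcal{D}(E_k^{(h_n)},E_{k-1}^{(h_n)})\le \tfrac{C}{h_n}\,\mathcal{D}(E_i^{(h_n)},E_{i-1}^{(h_n)})$ for $i\ge\lfloor\bar t/h_n\rfloor$; the discrete Gronwall lemma~\cite[Lemma 3.1]{JulMorPonSpa} then yields $\tfrac1{h_n}\sum_{k>\lfloor t/h_n\rfloor}\mathcal{D}(E_k^{(h_n)},E_{k-1}^{(h_n)})\le Ce^{-t/C}$, the analogue of \eqref{decay D}. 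Together with \emph{(a)} this gives $|E^{(h_n)}(s)\triangle E^{(h_n)}(t)|\le Ce^{-t/C}$ for $\bar t\le t<s\le t+1$, uniformly in $h_n$; letting $h_n\to0$ we obtain $|E(s)\triangle E(t)|\le Ce^{-t/C}$, hence $E(t)\to E_\infty$ in $L^1$ exponentially fast. For the shape of $E_\infty$ we average \emph{(b)} over an interval $[t,t+e^{-\alpha t}]$ and use the dissipation decay to produce, via the mean value theorem, a time $s_n\in[t,t+e^{-\alpha t}]$ with $\|\kappa_{E^{(h_n)}(s_n)}-\okappa_{E^{(h_n)}(s_n)}\|_{L^2(\partial E^{(h_n)}(s_n))}^2\le Ce^{-t/C}$ for $\alpha$ small — exactly \eqref{expo3}; then Theorems~\ref{teo alex} and~\ref{prop alex}, followed by passing to the limit $h_n\to0$ and then $t\to\infty$, identify $E_\infty$ (or $\T^2\setminus E_\infty$) with a finite union of equal disks or a finite union of disjoint parallel strips, exactly as in the last part of the proof of Theorem~\ref{main theorem 2}. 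Case~2 is identical to Case~2 there: monotonicity forces $f_n\to F_\infty$ uniformly on compact time intervals, so $E(t)$ is constant for $t>\bar t$, and the same averaging argument — now with $o(1)$ in place of $Ce^{-t/C}$ — together with Proposition~\ref{prop alex} gives the limiting shape.

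The main obstacle is estimate \emph{(b)}: the trace-type inequality $\int_{\partial E_k^{(h)}}\sd_{E_{k-1}^{(h)}}^2\,\ud\H^1\le C\,\mathcal{D}(E_k^{(h)},E_{k-1}^{(h)})$ with $C=C(m,M)$, together with the uniform (in $h$) density estimates for the discrete sets it relies on. In the Mullins--Sekerka setting the potential $u_E$ comes with an a priori $\|\nabla u_E\|_{L^2(\T^2)}$ bound and one simply invokes the trace inequality \eqref{4.12JMPS}; here $\sd_{E_{k-1}^{(h)}}/h$ is not small in any fixed norm as $h\to0$, so one must exploit the geometry of the thin region $E_k^{(h)}\triangle E_{k-1}^{(h)}$ — on which $|\sd_{E_{k-1}^{(h)}}|$ is comparable to the local width of the region — together with the $C^{2,\alpha}$-regularity and the uniform curvature bounds for the discrete sets from~\cite{Jul23,MSS}. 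A secondary, purely geometric subtlety, already present for Theorem~\ref{main theorem 2} and discussed in the introduction, is that when $E_\infty$ is a union of strips it is not determined by its perimeter alone; this is what forces the additional hypotheses of Corollary~\ref{cor:convper} and Remark~\ref{rmk:dist_strip} when one wants convergence of the perimeters, but it does not affect the $L^1$-convergence asserted in Theorem~\ref{main theorem 1}.
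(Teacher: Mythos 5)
Your proposal is correct and follows essentially the same route as the paper, which itself presents Theorem~\ref{main theorem 1} as a sketch deferring the flow-specific estimates to \cite[Theorem 1.2]{JulMorPonSpa}: your steps \emph{(a)}--\emph{(c)} are precisely the ingredients used there, and in particular the trace-type inequality $\int_{\partial E_k^{(h)}}\sd_{E_{k-1}^{(h)}}^2\,\ud\H^1\le C\,\mathcal{D}(E_k^{(h)},E_{k-1}^{(h)})$ that you flag as the main obstacle is exactly what the paper imports via \cite[equations (3.9), (3.10)]{JulMorPonSpa} (resting on the density estimates of \cite{MSS,Jul23}), so it is already available rather than a gap. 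The only difference is that you spell out the dissipation-to-$L^1$ continuity and the Lagrange-multiplier/density bounds in more detail than the paper does.
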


The proof  of this result is similar to the one of Theorem~\ref{main theorem 2} and~\cite[Theorem 1.2]{JulMorPonSpa}. We just present a sketch of it.
\begin{proof}[\textbf{Proof of Theorem~\ref{main theorem 1}}]

Let $\{E(t)\}_{t\geq0}$ be an \textit{area-preserving flat flow} and let $\{E^{(h_n)}(t)\}_{t\geq0}$ be a \textit{discrete
flow} converging to $E(t)$. 
Set $f_n(t) = P(E^{(h_n)}(t))$. Reasoning as above (cp.~\cite{JulMorPonSpa})  the sequence $\{f_n\}_{n\in\N}$ converges pointwise, up to a subsequence, to some non-increasing function $f_\infty:[0,\infty)\to \R$. 
We set 
\[
F_\infty= \lim_{t\to \infty}f_\infty(t),\quad v^{(h_n)}_t=\frac1{h_n} \textnormal{sd}_{E^{(h_n)}_{ \lfloor t/h_n\rfloor-1}}.
\]
The proof is then divided into two cases, as previously. We detail  just one of the two.  We assume there exist $P<P'$  two consecutive elements of the sequence $ \mathcal{P}_{m,M}$ such that either $P<F_\infty<P'$ or $F_\infty=P$ and $f_\infty(t) >P$ for every $t\in [0,\infty)$.
In particular, there exists $\bar t >0$ such that, for every $T>\bar t$ we can find $\bar n\in \N\setminus\{0\}$ such that
\begin{gather}
\label{expo0.9}
P\leq f_n(t) < P'\qquad \text{and}\qquad P' -f_n(t) \geq \frac{P'-F_\infty}{2}=:\delta_0
\end{gather}
for every $n\geq \bar n$ and   $t \in [ \bar t, T]$.
Reasoning exactly as in the proof of~\cite[Theorem 1.2]{JulMorPonSpa} we get
\beq  \label{expo1.5}
|E(t)\Delta E(s)| \leq  CM  e^{-\frac{t}{4C'_0}} \qquad \text{for all} \, \, \bar t \leq t \leq s\leq t+1
\eeq
showing in particular the exponential convergence in $L^1$ of   $E(t)$  towards a set $E_\infty$, with $|E_\infty| = m$ and $P(E_\infty)\le P(E(0))$. 
 We conclude by showing that $E_\infty$ is the union of disjoint open disks with the same radius or a union of strips, or the complement of such configurations. 

Fix $t\geq \bar t$, $0<\alpha<\frac1{2C'_0}$.
By~\cite[equations (3.9), (3.10)]{JulMorPonSpa}  we have  
\beq\label{expo2}
\begin{split}
&\int_{[t, t+e^{-\alpha t}]} \Big(\int_{\bd E^{(h_n)}(s)} (v^{(h_n)}_s)^2 d\H^1\Big) ds \leq
\frac{1}{h_n} \sum_{i=\lfloor \frac{t}{h_n}\rfloor}^{\lfloor \frac{t+e^{-\alpha t}}{h_n}\rfloor}\int_{\bd E_i^{(h_n)}} \textnormal{sd}_{E_{i-1}^{(h_n)}}^2 \, d \H^1 \\
&\leq C
\sum_{i=\lfloor \frac{t}{h_n}\rfloor}^{\lfloor \frac{t+e^{-\alpha t}}{h_n}\rfloor}\frac{1}{h_n}\mathcal{D}(E^{(h_n)}_{i},E^{(h_n)}_{i-1}) \leq C   e^{-\frac{t}{2C'_0}}\,,
\end{split}
\eeq
for $n$ sufficiently large. 
Now, by \eqref{expo2} and the mean value theorem there exists $s_n \in [t, t+e^{-\alpha t}]$ such that 
\beq\label{MCF:expo3}
\|\kappa_{E^{(h_n)}(s_n)}-\okappa_{E^{(h_n)}(s_n)} \|_{L^2(\bd E^{(h_n)}(s_n))}^2\leq \int_{\bd E^{(h_n)}(s_n)} (v^{(h_n)}_{s_n})^2 d\H^1 \leq Ce^{-\big(\frac{1}{2C'_0}-\alpha\big)t}\,.
\eeq
Here, in the first inequality, we have used \eqref{eg:Euler-Lag}. At this stage, we can proceed as in the proof of Theorem~\ref{main theorem 2}. 
\end{proof}

As for the Mullins-Sekerka case,  we can show an improved convergence of the flow under   additional assumptions (compare Corollary \ref{cor:convper} and Remark \ref{rmk:dist_strip} in the previous section).

\begin{cor}\label{cor:conveper+hausd}
    Let $\{E(t)\}_{t \ge 0}$ be as in Theorem~\ref{main theorem 1}, and let $\{E^{(h_n)}(t)\}_{t \ge 0}$ be a discrete flow converging to $E(t)$ in $L^1$ as $n \to \infty$. Assume also that $P(E_0)<4$ if $E_\infty$ is as in $(\textnormal{iii})$. Then, it holds
    \begin{equation*}
        \sup_{x\in E(t)\triangle E_\infty}\textnormal{dist}(x,\bd E_\infty) + |P(E(t))-P(E_\infty)|\le C e^{-\frac tC},
    \end{equation*}
    for some constant $C>0$ independent of $t$.
\end{cor}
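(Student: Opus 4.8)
The plan is to adapt the strategy of the proof of Corollary~\ref{cor:convper}, adding the Hausdorff-distance estimate on top of the perimeter estimate. As in the Mullins--Sekerka case, I would first observe that the hypothesis $P(E_0)<4$ forces $E_\infty$ to be a single strip (any union of two disjoint parallel strips has perimeter at least $4$), and therefore, in the notation of the proof of Theorem~\ref{main theorem 1}, the approximating sets $F^{(h_n)}(s_n)$ and their $L^1$-limit $F(s_t)$ are single strips as well, with $P(F(s_t))=P(F^{(h_n)}(s_n))=P(E_\infty)$. The perimeter bound $|P(E(t))-P(E_\infty)|\le Ce^{-t/C}$ then follows by exactly the same argument as in Corollary~\ref{cor:convper}: apply Lemma~\ref{lem:percon} with $F=F(s_t)$ and $E=E(t+e^{-\alpha t})$, use the discrete perimeter estimate \eqref{eq:P_est_discr} adapted to the area-preserving setting (which holds by \eqref{MCF:expo3} and Theorems~\ref{teo alex}, \ref{prop alex}), use monotonicity of the perimeters along the discrete flow \eqref{eg:energy-compa}, and pass to the limit $n\to\infty$, using the exponential $L^1$-convergence \eqref{expo1.5} together with \eqref{eq:mtdec}-type estimates.

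For the Hausdorff-distance term, I would argue as follows. Since $E_\infty$ is a single strip and $F(s_t)$ is a strip with $P(F(s_t))=P(E_\infty)$, the two strips have the same width (hence the same area, by $|F(s_t)|=m=|E_\infty|$) and differ only by a translation; moreover $|F(s_t)\triangle E_\infty|\to0$ exponentially, so this translation vector goes to $0$ exponentially fast as $t\to\infty$. For a single strip $S$ of fixed width, control of $|E\triangle S|$ gives control of $\sup_{x\in E\triangle S}\dist(x,\bd S)$ for sets $E$ that are normal graphs over $\bd S$ with small $C^1$-norm: indeed, from the proof of Theorem~\ref{prop alex}, each boundary component of $E^{(h_n)}(s_n)$ is a normal graph over a line with $C^{1,1/2}$-norm controlled by $\|\kappa-\bar\kappa\|_{L^2}$, hence by \eqref{MCF:expo3} decaying like $e^{-t/C}$, which bounds the height function and therefore $\sup_{x\in E^{(h_n)}(s_n)\triangle F^{(h_n)}(s_n)}\dist(x,\bd F^{(h_n)}(s_n))$ by $Ce^{-t/C}$. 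Passing to the limit $n\to\infty$ gives $\sup_{x\in E(s_t)\triangle F(s_t)}\dist(x,\bd F(s_t))\le Ce^{-t/C}$, and combining with the exponentially small translation between $F(s_t)$ and $E_\infty$, and with the $L^1$-in-time estimate \eqref{expo1.5} to move from the selected time $s_t\in[t,t+e^{-\alpha t}]$ to a generic time, yields $\sup_{x\in E(t)\triangle E_\infty}\dist(x,\bd E_\infty)\le Ce^{-t/C}$.

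There is one technical point to handle carefully: the Hausdorff-type quantity $\sup_{x\in E(t)\triangle E_\infty}\dist(x,\bd E_\infty)$ is not obviously continuous under $L^1$-convergence, so I cannot simply pass to the limit in a Hausdorff bound for the discrete flow as I did for the $L^1$-bound. The way around this is to keep working with the $C^1$-graph parametrizations: the decay of the $C^{1,1/2}$-norm of the parametrization of $\bd E^{(h_n)}(s_n)$ over $\bd F^{(h_n)}(s_n)$ is uniform in $n$ for fixed $t$ (by \eqref{MCF:expo3}, Theorems~\ref{teo alex} and \ref{prop alex}), so the bound on the height functions survives in the limit, and the Hausdorff distance between $E(s_t)$ and the limiting strip $F(s_t)$ inherits the same exponential bound. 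To then pass from $s_t$ (a time in $[t,t+e^{-\alpha t}]$) to an arbitrary time $\tau\ge t$, I use that $E(\tau)$ is, by the analogue of \eqref{eq:mtdec}, exponentially $L^1$-close to one of these limiting strips, and a strip is determined up to translation by its area, so its position is pinned down up to an exponentially small error by $|E(\tau)\triangle E_\infty|$; smallness of this translation plus the graph structure controls the Hausdorff distance.

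The main obstacle I expect is precisely this lack of lower semicontinuity / continuity of the Hausdorff distance under $L^1$-convergence, both in the limit $h_n\to0$ and across the time interval: one must pass the estimate through the uniform $C^1$-graph structure rather than directly through the $L^1$-topology, and one must rule out (using $P(E_0)<4$) the degenerate scenarios in which a strip collapses to a line or two strips merge, since in those cases $\bd E_\infty$ would differ drastically from $\bd F(s_t)$ and the Hausdorff control would fail. Once it is established that everything in sight is a single strip of fixed width and the defining translations are exponentially small, the estimate is routine.
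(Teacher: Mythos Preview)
Your perimeter argument is fine and matches Corollary~\ref{cor:convper}. The Hausdorff-distance part, however, has a genuine gap. You correctly obtain, at the special times $s_n\in[t,t+e^{-\alpha t}]$, a $C^{1,1/2}$ graph structure for $\bd E^{(h_n)}(s_n)$ over the boundary of a strip, with height function of order $e^{-t/C}$; this gives a Hausdorff bound at those times, and it can plausibly be passed to the limit $h_n\to0$ at the corresponding time $s_t$. But your passage from $s_t$ to an arbitrary time $\tau$ does not work: at a generic $\tau$ you have no graph structure for $E(\tau)$, only $L^1$-closeness to $E_\infty$, and $L^1$-closeness gives no control whatsoever on $\sup_{x\in E(\tau)\triangle E_\infty}\dist(x,\bd E_\infty)$ --- a set can be $L^1$-close to a strip and still have thin tentacles reaching far from $\bd E_\infty$. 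The sentence ``smallness of this translation plus the graph structure controls the Hausdorff distance'' presupposes a graph structure that is only available at the special times. You identify this as ``the main obstacle'' but then declare it ``routine'' once everything is a single strip; it is not.

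The paper fills exactly this gap with Lemma~\ref{l:dan}, a dynamical propagation lemma at the level of the discrete flow: if $E^{(h)}(t_0)$ is Hausdorff-$\e$-close to a strip $F$ whose boundary components are well separated, then $E^{(h)}(t)$ remains Hausdorff-$C\sqrt\e$-close for all $t\in[t_0,t_0+\e]$. The proof uses the Euler--Lagrange equation \eqref{eg:Euler-Lag} pointwise (the term $\sd_{E^{(h)}_{k-1}}/h$ controls how far $\bd E^{(h)}_{k}$ can move from $\bd E^{(h)}_{k-1}$) together with the $L^2$-in-time bound on the Lagrange multipliers. This lemma is what transfers the Hausdorff control from the special times $s_n$, where Theorem~\ref{prop alex} applies, to all intermediate times, uniformly in $h$. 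Note that such a propagation mechanism is specific to the curvature-flow scheme --- it relies on the appearance of $\sd_{E_{k-1}}$ in \eqref{eg:Euler-Lag} --- which is why the corresponding Mullins--Sekerka corollary does not claim a Hausdorff bound.
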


The proof of the corollary is a consequence of Theorem~\ref{main theorem 1} combined with the next lemma in the spirit of \cite[Lemma 4.3]{JN2020}. For details, we refer to   \cite{JulMorOroSpa,JulMorPonSpa}.

\begin{lemma}\label{l:dan}
Let $m,$ $M>0$. Consider $E_0$ a set of  finite perimeter with $|E_0|=m$ and $P(E_0) \leq M$, and  $F$ is a union of disjoint strips. 
For every $\delta>0$ there exist $\e_0,h_0>0$  such that the following holds. 
If for every two distinct components $(\bd F)_i,(\bd F)_j$ of  $\bd F$ it holds  $\max_{(\partial F)_j}\dist_{(\partial F)_i}\ge \delta$, and   
$\{E^{(h)}(t)\}_{t \geq 0 }$ is a discrete flow starting from $E_0$ satisfying
\begin{align}\label{eq:dist_asm}
\sup_{x \in E^{(h)} (t_0)\Delta F} \dist_{\partial F}(x) \leq \e, \qquad \text{for  some }\e \leq \e_0,\  t_0\ge 0,
\end{align}
then there exists a constant $C>0$ (depending on $m, M$ only) such that, if $h\le \min\{h_0,\e\}$, it holds
\[
\sup_{x \in E^{(h)}(t) \Delta F} \dist_{\partial F}(x) \leq C \sqrt\e, \qquad \text{for all } \ t_0 \le t \le t_0 + \e.
\]
\end{lemma}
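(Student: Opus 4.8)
The statement is a quantitative, discrete-in-time maximum-principle-type estimate: if at some time $t_0$ the discrete flow is $\e$-close to a union of strips $F$ in the sense of the one-sided distance $\dist_{\partial F}$, then it stays $C\sqrt\e$-close for a further time of order $\e$. The natural strategy is to track the evolution of the quantity
\[
d(t):= \sup_{x\in E^{(h)}(t)\triangle F}\dist_{\partial F}(x)
\]
along the discrete scheme and show it cannot grow too fast. The key tool is the comparison between consecutive steps encoded in the minimality \eqref{eg:energy-compa} and in the Euler--Lagrange equation \eqref{eg:Euler-Lag}, together with the elementary geometry of strips: on the region $\{|\sd_F|\le \delta/8\}$ the signed distance $\sd_F$ is harmonic (indeed affine in the direction normal to the strips), exactly as exploited in Lemma~\ref{lem:percon}.

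First I would set up the iteration. Writing $t_0=k_0 h$, I want to estimate $d((k+1)h)-d(k h)$ for $k\ge k_0$ as long as $d(kh)$ stays below, say, $\delta/16$, so that the relevant part of $E^{(h)}(kh)\triangle F$ lies in the "good" slab where $\sd_F$ is affine and the competitor geometry is under control. The minimality of $E_{k+1}^{(h)}$ against the competitor $E_k^{(h)}$ gives $P(E_{k+1}^{(h)})+\tfrac1h\mathcal D(E_{k+1}^{(h)},E_k^{(h)})\le P(E_k^{(h)})$; combined with the (already known, from Theorem~\ref{main theorem 1}) exponential decay of $P(E^{(h)}(t))-P(F)$ in the strip case — or, more directly, with a one-step perimeter comparison in the spirit of Lemma~\ref{lem:percon} applied with $F$ — one controls $\mathcal D(E_{k+1}^{(h)},E_k^{(h)})=\int_{E_{k+1}^{(h)}\triangle E_k^{(h)}}\dist(x,\partial E_k^{(h)})\,\ud x$ by $h$ times a small quantity, which bounds the $L^1$-displacement of the two consecutive sets weighted by distance. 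This is the mechanism that prevents the boundary from travelling outward faster than $\sim\sqrt{\text{(perimeter drop)}/h}$ per step, which telescoped over $\sim\e/h$ steps gives the $\sqrt\e$ scaling.

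Second, I would convert this into a pointwise distance bound. The Euler--Lagrange equation \eqref{eg:Euler-Lag} reads $\sd_{E_k^{(h)}}/h=-\kappa_{E_{k+1}^{(h)}}+\lambda_{k+1}^{(h)}$ on $\partial E_{k+1}^{(h)}$; testing it against a suitable vector field built from $\nabla\sd_F$ cut off at scale $\delta$ (exactly the field $X=\zeta\nabla\sd_F$ of Lemma~\ref{lem:percon}), and using that $\div X$ is bounded by $C/\delta$ and vanishes to the needed order in $\{|\sd_F|\le\delta/8\}$, one obtains an estimate of the form $d((k+1)h)\le d(kh)+C\,\big(P(E^{(h)}(kh))-P(E^{(h)}((k+1)h))\big)^{1/2}h^{1/2}/\dots$ plus a controlled error. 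Summing from $k_0$ to $k_0+\lfloor\e/h\rfloor$, using Cauchy--Schwarz on the perimeter increments $\sum\big(P(E^{(h)}(kh))-P(E^{(h)}((k+1)h))\big)\le P(E^{(h)}(t_0))-P(F)\le C\e$ (the last bound being guaranteed, after possibly shrinking $\e_0$ and $h_0$, by the hypothesis \eqref{eq:dist_asm} together with Lemma~\ref{lem:percon}, which turns the one-sided distance smallness into perimeter-gap smallness), yields $d(t)\le d(t_0)+C\sqrt\e\le\e_0+C\sqrt\e\le C\sqrt\e$ for all $t\in[t_0,t_0+\e]$, provided $\e_0$ is small enough that the iterates never leave the slab $\{d<\delta/16\}$ (a bootstrap/continuation argument closes this).

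The main obstacle I anticipate is the bootstrap step ensuring that $d(kh)$ genuinely stays below the threshold $\delta/16$ throughout the time window, so that the harmonicity of $\sd_F$ and the non-degeneracy of the strip geometry (guaranteed by the separation hypothesis $\max_{(\partial F)_j}\dist_{(\partial F)_i}\ge\delta$) remain available at every step; this is where the condition $h\le\min\{h_0,\e\}$ and the smallness of $\e_0$ are used in a somewhat delicate way, because the per-step increment bound itself is only valid under the slab confinement. A secondary technical point is handling the Lagrange multiplier $\lambda_{k+1}^{(h)}$ and the volume constraint when testing the Euler--Lagrange equation — one must choose the test vector field with the right flux so that the $\lambda$-term is controlled (as in the proof of Lemma~\ref{lemma4.1JMPS}), and keep track that the strips in $F$ have fixed widths so no component degenerates. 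Once these are in place the estimate is essentially a discrete Grönwall/telescoping argument and the rest is routine.
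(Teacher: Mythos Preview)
Your plan has a real gap at the heart of the iteration. You want to bound the per-step growth $d((k{+}1)h)-d(kh)$ by something like $C\big(P(E^{(h)}_k)-P(E^{(h)}_{k+1})\big)^{1/2}h^{1/2}$ and then telescope, using that the total perimeter drop on $[t_0,t_0+\e]$ is at most $C\e$. But the hypothesis \eqref{eq:dist_asm} gives only one-sided distance smallness, not perimeter closeness: Lemma~\ref{lem:percon} yields $P(F)\le P(E)+C|E\triangle F|$, which is the wrong direction for concluding $P(E^{(h)}(t_0))-P(F)\le C\e$. A set can be $\e$-close to $F$ in the sense of \eqref{eq:dist_asm} and still have arbitrarily large perimeter (a wiggly boundary inside the $\e$-slab). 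Your alternative, invoking Theorem~\ref{main theorem 1} for exponential perimeter decay in the strip case, is circular: that theorem does \emph{not} yield perimeter convergence when $E_\infty$ is a union of strips --- this is precisely the content of Corollary~\ref{cor:conveper+hausd}, whose proof uses the present lemma. So the sum $\sum_k\big(P(E^{(h)}_k)-P(E^{(h)}_{k+1})\big)$ is not controlled and the $\sqrt\e$ scaling does not follow. A second, related gap is that testing \eqref{eg:Euler-Lag} against an integral vector field like $\zeta\nabla\sd_F$ produces averaged identities, not a \emph{pointwise} bound on $d((k{+}1)h)$; you have not explained how the sup-distance is recovered from such a test.

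The paper's proof avoids both issues by arguing pointwise. At the point $\bar z\in\partial E^{(h)}_{k+1}$ where $\dist_{\partial F}$ is maximal, a line parallel to $\partial F$ touches $E^{(h)}_{k+1}$ from outside, so $\kappa_{E^{(h)}_{k+1}}(\bar z)\ge 0$; evaluating \eqref{eg:Euler-Lag} at $\bar z$ then gives directly
\[
d((k{+}1)h)-d(kh)\le \sd_{E^{(h)}_k}(\bar z)\le h\,|\lambda^{(h)}_{k+1}|.
\]
The sum $\sum_{\ell=1}^{\lfloor\e/h\rfloor}h|\lambda^{(h)}_\ell|$ is bounded by $C\sqrt\e$ via Cauchy--Schwarz and the a priori $L^2$-in-time estimate on the Lagrange multipliers from \cite[Proposition~4.2]{JN2020}, which holds without any perimeter-closeness assumption; the bootstrap confinement to the $\delta/4$-slab is guaranteed by the one-step displacement bound \cite[Proposition~3.2]{MSS}. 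In short, the Lagrange multiplier is not a secondary nuisance to be ``handled'' --- its $L^2$ bound is the engine of the estimate, replacing the perimeter-drop control you are missing.
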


\begin{proof}
    In this proof we  identify $\T^2$ with $[-\frac 12,\frac12)^2$.
    We fix $h_0,C_1$ as in   \cite[Proposition 4.2]{JN2020} and $\gamma_N$ as in  \cite[Proposition 3.2]{MSS}.  
    Note that in order to estimate $\sup_{E^{(h)}(t) \Delta F} \dist_{\partial F}$ it is sufficient to bound $\max_{\bd E^{(h)}(t)}\dist_{\partial F}.$
    We can assume without loss of generality  that $t_0=0$, $F=S$ is a strip with zero slope (as by the bound on the perimeter we know that there are only a finite number of possible slopes),   and that one component of $\bd S$ is the $x$-axis, denoted by $\Gamma$. We choose $h_0\le \e_0<1$ such that 
    \[
    (C_1+2+\gamma_N) \sqrt{\e_0} \le \delta/4.
    \]
    We start by considering $t=h$.  By the choice of $\e_0$ and  \cite[Proposition 3.2]{MSS} we find that $\max_{(\bd E^{(h)}(h))_i}\dist_{{(\bd E^{(h)}(h))_j}} \ge \delta/2$ for $i\neq j$. Therefore, it is enough to work with a single component of the boundaries of $\bd E^{(h)}(h), \bd E_0 $ and with $\Gamma$. 

    Assume that $\dist_\Gamma(z) > \dist_\Gamma(z')$ for all $z \in \bd E^{(h)}(h)$ and $z' \in \bd E_0$, otherwise there is nothing to prove. 
    Let $\bar z=(\bar x,\bar y)\in \bd E^{(h)}(h)$, and $z'=(x',y')\in\bd E_0$, such that 
    \[
    |\bar y|=\dist_\Gamma(\bar z)=\max_{\bd E^{(h)}(h)}\dist_\Gamma >  |y'|=\dist_\Gamma( z')=\dist_\Gamma (\bd E_0).
    \]
    Let us assume without loss of generality that $\bar y,y'\ge0$, the other cases follow considering only the points of $E^{(h)}(h), E_0$ in $\{y\ge 0\}$ or $\{y\le 0\}$. 
    We have two cases. 
    If $\bar z\notin E_0$ then $\sd_{E_0}(\bar z)\ge 0$, and  it holds 
    \[
       (x, y' +t)\notin E_0\quad \text{for all }t\in (0,\bar y-y'],\ x\in[-1/2,1/2],
    \]
    otherwise we would have $\max_{\bd E_0}\dist_\Gamma\ge\dist_\Gamma((x,y'+t))>y'. $
    In particular, $\sd_{E_0}(\bar z)\ge \bar y-y'.$ 
    Moreover, note that the line  $\{y=\bar y\}$ touches the set $E^{(h)}(h)$ from above, thus $\kappa_{E^{(h)}(h)}(\bar z)\ge 0$. Using the Euler-Lagrange equation \eqref{eg:Euler-Lag} at $\bar z$ we get
    \begin{equation}\label{eq:iterEL}
        h|\lambda^{(h)}(h)|\ge \sd_{E_0}(\bar z) \ge \bar y-y'=\big(\max_{\bd E^{(h)}(h)}\dist_\Gamma\big)- \big (\max_{\bd E_0} \dist_\Gamma\big).
    \end{equation}
    The other case $\bar z\in E_0$ is analogous. 
    We then remark that for $K>0$, \cite[Proposition 4.2]{JN2020} implies 
    \[
    \begin{split}
        \sum_{\ell=1}^K h|\lambda^{(h)}(\ell h)|\le \int_h^{Kh}(\tfrac1{\sqrt{Kh}}+\sqrt{Kh} |\lambda^{(h)}(t)|^2)dt\le \sqrt{Kh}+C_0\sqrt{Kh}.
    \end{split}
    \]
    In particular, letting $K=[\e/h]$ and using \eqref{eq:dist_asm}, \eqref{eq:iterEL} we get
    \[
        \max_{\bd E^{(h)}(h)}\dist_\Gamma\le \max_{\bd E_0} \dist_\Gamma +   h|\lambda^{(h)}(h)| \le \e + \sqrt{Kh}+C_1\sqrt{Kh}\le (C_1+2)\sqrt\e\le \delta/4,
    \]
    where the last inequality follows from the choice of $\e_0.$

    Using \cite[Proposition 3.2]{MSS}, we note that the set $E^{(h)}(2h)$ still satisfies the starting assumption $\max_{(\bd E^{(h)}(2h))_i}\dist_{{(\bd E^{(h)}(2h))_j}} \ge \delta/2$ for $i\neq j$. We can reason as before to get 
    \[
        \max_{\bd E^{(h)}(2h)}\dist_\Gamma\le \max_{\bd E^{(h)}(h)} \dist_\Gamma +   h|\lambda^{(h)}(2h)| \le  \max_{\bd E_0} \dist_\Gamma +   \sum_{\ell =1}^2 h|\lambda^{(h)}(\ell h)|\le \delta/4.
    \]
    Therefore, the procedure can be iterated and this concludes the proof. 
\end{proof}

\medskip
\section*{Acknowledgements}
The authors wish to thank professors V. Julin and M. Morini for helpful discussions and comments. V. Arya was 
supported by the Academy of Finland grant 314227.  
D. De Gennaro is member of the Gruppo Nazionale per l’Analisi Matematica, la Probabilità e le loro Applicazioni (GNAMPA) of the Istituto Nazionale di Alta Matematica (INdAM).  
D. De Gennaro was funded by the European Union: the European Research Council (ERC), through StG ``ANGEVA'', project number: 101076411. Views and opinions expressed are however those of the authors only and do not necessarily reflect those of the European Union or the European Research Council. Neither the European Union nor the granting authority can be held responsible for them.
A. Kubin research has been supported by the Austrian Science Fund (FWF) through grants 10.55776/F65, 10.55776/P35359, 10.55776/Y1292.

\printbibliography 
\end{document}